\documentclass[11pt]{article}

\usepackage{mathtools}
\usepackage{amsmath, amsthm, amsfonts,amssymb}
\usepackage{enumitem}
\usepackage{graphicx}
\usepackage{colortbl}
\usepackage{tikz}
\usepackage[utf8]{inputenc}
\usepackage{esint}
\usepackage{mathrsfs}
\usepackage{subfig,float}
\usepackage[T1]{fontenc}
\usepackage{mathrsfs}  
\usepackage{varwidth}
\usepackage{bbm} 
\usepackage{enumitem}
\usepackage{cite}
\usepackage{mathtools}
\usepackage{array}
\usepackage{graphicx}
\usepackage{caption}
\usepackage{dsfont}
\usepackage{ushort}
\usepackage{accents}

\usepackage{lineno} 

\newcommand\blfootnote[1]{%
	\begingroup
	\renewcommand\thefootnote{}\footnote{#1}%
	\addtocounter{footnote}{-1}%
	\endgroup
}

\def\d{\,\mathrm{d}}
\def\dv{\d v}

\newcommand{\lp}{\left(}
\newcommand{\rp}{\right)}

\usepackage[colorlinks=true,linkcolor=blue,citecolor=blue,urlcolor=blue,breaklinks]{hyperref}

\numberwithin{equation}{section}

\usepackage{pgfplots}
\pgfplotsset{compat = newest}

\usepackage[titletoc, toc, page]{appendix} 

\usepackage{hyperref}
\usepackage{url}
\bibliographystyle{plain}
\nocite{}

\addtolength{\oddsidemargin}{-.875in}
\addtolength{\evensidemargin}{-.875in}
\addtolength{\textwidth}{1.75in}

\addtolength{\topmargin}{-.875in}
\addtolength{\textheight}{1.75in}

\def\R{\mathbb{R}}
\def\Z{\mathbb{Z}}

\def\T{\mathbb{T}}

\def\1{\mathds{1}}

\let\mc=\mathcal

\def\d{\,\mathrm{d}}
\def\dx{\,\mathrm{d}x}

\def\ixv{\int_{\T^d} \int_{\R^d}}

\def\ix{\int_{\T^d}}

\def\p{\,\partial}

\let\eps\varepsilon

\newtheorem{thm}{Theorem}[section]

\newtheorem{lem}[thm]{Lemma}
\newtheorem{prp}[thm]{Proposition}

\theoremstyle{definition}

\theoremstyle{remark}
\newtheorem{remark}[thm]{Remark}

\hypersetup{pdftitle={KineticToNonlinearDiffusion}}
\hypersetup{pdfauthor={Josephine Evans, Daniel Morris  \& Havva Yoldaş }}

\author{Josephine Evans\footnote{Warwick Mathematics Institute, University of Warwick, Zeeman Building, Coventry CV4 7AL, United Kingdom. josephine.evans@warwick.ac.uk}
	\and	Daniel Morris\footnote{Corresponding author.} \footnotemark[3] 
	\and Havva Yolda\c{s}\footnote{Delft Institute of Applied Mathematics, Faculty of Electrical Engineering, Mathematics and Computer Science, Delft University of Technology, Mekelweg 4, 2628CD Delft, The Netherlands. d.n.l.morris@tudelft.nl \& h.yoldas@tudelft.nl}
	\footnote{Theoretical Sciences Visiting Program (TSVP), Okinawa Institute of Science and Technology Graduate University, Onna, 904-0495, Japan.}}

\title{On a class of nonlinear BGK-type kinetic equations with density dependent collision rates}

\makeindex

\begin{document}
	
	\maketitle
	
	\vspace{-10pt}

	\begin{abstract}
		
		We consider a class of nonlinear, spatially inhomogeneous kinetic equations of BGK-type with density dependent collision rates. These equations share the same superlinearity as the Boltzmann equation, and fall into the class of run and tumble equations appearing in mathematical biology. We prove that the Cauchy problem is well-posed, and the solutions propagate Maxwellian bounds over time. Moreover, we show that the solutions approach to equilibrium with an exponential rate, known as a hypocoercivity result. Lastly, we derive a class of nonlinear diffusion equations as the hydrodynamic limit of the kinetic equations in the diffusive scaling, employing both hypocoercivity and relative entropy methods. The limit equations cover a wide range of nonlinear diffusion equations including both the porous medium and the fast diffusion equations.

		\blfootnote{\emph{Keywords and phrases.} Kinetic equation, Macroscopic limit, Diffusive asymptotics, Nonlinear BGK-type equation, Nonlinear diffusion equation, Porous medium equation, Fast diffusion equation, Hypocoercivity, Relative Entropy Method}
		\blfootnote{\emph{2020 Mathematics Subject Classification.} 35A01, 82C40, 35B40, 35Q35}
		
	\end{abstract}
	
	\tableofcontents
	
	\newpage
	\section{Introduction} \label{sec:intro}
	
	The purpose of this paper is to introduce and analyse a class of nonlinear kinetic equations with density dependent collision rates. These equations are of particular interest because they can be used to derive the porous medium and fast diffusion equations. Our class of equations includes those with quadratic nonlinearity that resembles the nonlinearity found in physical gas kinetic models such as the Boltzmann equation.

	The porous medium and fast diffusion equations exhibit rich mathematical structures and have been extensively studied. They admit a natural class of Lyapunov functions (such as $p$-entropies) and display a variety of complex dynamical behaviours. Depending on the parameter regime and initial data, solutions to these equations may exhibit finite speed of propagation, non-uniqueness, or even finite-time extinction, all of which pose significant analytical challenges.
	
	\medskip
	The class of equations that we study is as follows,
	\begin{align}
		\label{eq:BGK}
		\p_t f(t,x,v) + v \cdot \nabla_x f(t,x,v) = \rho_f^\alpha (t,x) \lp \rho_f (t,x) \mc M (v) - f (t,x,v)\rp
	\end{align} where $f:=f(t,x,v) \geq 0$, $(t,x,v) \in (0, +\infty] \times \T^d \times \R^d$, $d \geq 1$, $\rho_f(t,x) := \int_{\R^d}f(t,x,v)\d v$ is the spatial density, $\alpha \in \mathbb R$ and $\mc M(v) = (2\pi)^{-d/2} \exp(-|v|^2/2)$ is a Gaussian function (or the normalised Maxwellian distribution with $0$ mean velocity, unit temperature and the Boltzmann constant). Here, $\T^d$ denotes the $d-$dimensional unit torus.
	\medskip 
	
	Equation \eqref{eq:BGK} models an an ensemble of particles each of which has a position $x$ and velocity $v$. The unknown $f$ is the probability density of a "typical particle" in this ensemble over phase space. The dynamics described by \eqref{eq:BGK} are as follows. A particle will travel in a straight line with its current velocity until a random time. At this random time, a collision occurs and the particles velocity changes to one drawn from the distribution with density $\mathcal{M}(v)$ independently from everything else. The rate at which collisions happen is $\rho_f^\alpha(t,x)$. This means that the probability of a collision occurring is dependent on the number of particles in the vicinity of the colliding particle. The most natural choice of $\alpha$ in \eqref{eq:BGK} would be $\alpha=1$. In this case, the rate at which "collisions" happen is proportional to the number of particles available to be "collided" with. The probability of a collision occurring is higher when the spatial density is high.
	
	\medskip
	
	We consider a diffusive scaling $(t,x,v) \mapsto \lp \eps^2 t, \eps x, v\rp$, where $\eps >0$ is a given constant which can be taken as the dimensionless mean free path of particles. Denoting the new unknown as $f_\eps$, Equation \eqref{eq:BGK} then becomes,
	\begin{align} \label{eq:mainkinetic}
		\begin{split}
			\p_t f_\eps(t,x,v) + \frac{1}{\eps} v \cdot \nabla_x f_\eps(t,x,v) &= \frac{1}{\eps^2} \rho^\alpha_{f_\eps}(t,x) \lp \rho_{f_\eps} (t,x)  \mathcal{M} (v) - f_\eps(t,x,v)\rp\\
			f_\eps (0, x, v) &= f_{\eps,\text{in}} (x,v).
		\end{split}
	\end{align} Under the diffusive scaling, the particles undergo frequent collisions, and over a long timescale the macroscopic behaviour emerges. One of the goals of this article is to derive rigorously an equation describing this macroscopic behaviour. 
	
	\medskip 
	The main results of this article are summarised as follows. We prove 
	\begin{itemize}
		\item[\textbf{(i)}] \textbf{Global well-posedness of \eqref{eq:mainkinetic} and propagation of Maxwellian bounds.}\\
		This result relies on the maximum principle which implies that if the initial datum lies between a constant times Maxwellians, then the corresponding solutions also satisfy this property. 
		\item[\textbf{(ii)}] \textbf{Quantitative convergence to equilibrium (hypocoercivity) result as $t \rightarrow \infty$ for \eqref{eq:mainkinetic}}. \\
		Here, we use the $L^2$-hypocoercivity methods due to Dolbeault, Mouhot and Schmeiser introduced in \cite{DMS15} for mass-conserving linear kinetic equations. We adapt this technique to the nonlinear setting.
		\item[\textbf{(iii)}] \textbf{Quantitative diffusive limit of \eqref{eq:mainkinetic} as $\eps \to 0$ to the porous medium or fast diffusion equations.}\\
		Combining relative entropy and hypocoercivity methods, we show that solutions of Equation \eqref{eq:mainkinetic} as $\eps \to 0$ are functions of the form $\rho(t,x) \mc M (v)$ where $\rho(t,x)$ solves the following nonlinear diffusion equation:
		\begin{align} \label{eq:mainparabolic}
				\begin{cases}
				\begin{split}
				\partial_t \rho(t,x) &= \nabla_x \cdot \lp \rho^{-\alpha} (t,x) \nabla_x \rho (t,x) \rp,\\
				\rho(0,x) &= \rho_{\text{in}} (x).
				\end{split}
			\end{cases}
		\end{align}
	\end{itemize}
	This article forms one of very few works deriving the porous medium or fast diffusion equations from kinetic equations rigorously. To the best of our knowledge, this is the only work which treats the full range of possible $\alpha$. The limit equations \eqref{eq:mainparabolic} are well studied and appear in many modelling applications in various physical, biological, and engineering contexts. Understanding their emergence from the kinetic level is key to explaining how their exotic behaviour emerges at the macroscopic level from underlying microscopic processes.

	\paragraph{Outline of paper.} In the remainder of this section, we compare our class of equations with similar kinetic models including the BGK equation and run and tumble equations. We give a summary of the limit equations \eqref{eq:mainparabolic} and some of their interesting mathematical properties. Subsequently, we give our motivations for the present paper and we finish Section \ref{sec:intro} with the state of the art. In Section \ref{sec:ass-main_results}, we present our main results followed by some preliminary results. In Section \ref{sec:well-posedness}, we prove that the Cauchy problem \eqref{eq:mainkinetic} with initial data bounded from above and below by a constant Maxwellian is well-posed. We show the propagation of the Maxwellian bounds for all times. In Section \ref{sec:long-time-behaviour}, we establish the exponential relaxation to equilibrium in time using the hypocoercivity technique developed in \cite{DMS15}. Lastly, in Section \ref{sec:diffusive_asymptotics}, we quantify the rate of the diffusive asymptotics by combining a study of the finite-time asymptotics (Section \ref{ssec:finite_time_asymptotics}) with the uniform in $\eps$ convergence to equilibrium of the solutions to both the kinetic and the parabolic equations over long times (Section \ref{ssec:long_time_asymptotics}).
	
	\subsection{The models}
	\paragraph{The kinetic equations.}
	Nonlinear kinetic equations of the form \eqref{eq:BGK} can be considered as toy models for the Boltzmann equation as they have a kinetic structure, Maxwellian local equilibria and density dependent collision rates. 
	
	A natural comparison arises between the equations studied here and the BGK equations, which can be written as 
	\begin{align} \label{eq:BGKfull} 
		\p_t f(t,x,v) + v \cdot \nabla_x f(t,x,v) = \lambda(\rho_f(t,x)) \lp \rho_f(t,x) \mc{M}_{u_f(t,x), T_f(t,x)}(v) - f(t,x,v) \rp, 
	\end{align}
	where $\mc{M}_{u,T}(v) = (2\pi T)^{-d/2} \exp(|v-u|^2/2T)$ is the Maxwellian velocity distribution. The hydrodynamic quantities momentum $u_f$, and temperature $T_f$ are defined via
	\begin{align*}
		\rho_f(t,x)u_f(t,x) &= \int f(t,x,v)v \d v,\\
		\rho_f(t,x) (T_f(t,x) + |u_f(t,x)|^2) &= \int f(t,x,v)|v|^2 \d v.
	\end{align*} The BGK equation \eqref{eq:BGKfull} was introduced in \cite{BGK54} to serve as a computationally efficient alternative to the full Boltzmann equation while maintaining essential physical properties such as conservation of hydrodynamic quantities. In Equation \eqref{eq:BGKfull}, the collision operator, the right hand side of \eqref{eq:BGKfull}, conserves mass, momentum and kinetic energy. Our toy model \eqref{eq:BGK} simplifies \eqref{eq:BGKfull} radically to an equation whose collision operator conserves only mass. Equation \eqref{eq:BGKfull} was shown to be well posed in \cite{PP93} in the case $\lambda =1$ and the result can be straightforwardly extended to the case where $\lambda$ is bounded above and below. The original BGK equation in \cite{BGK54} has $\lambda(\rho) = \rho$. This makes the mathematical analysis more challenging. In this case, to the best of our knowledge, well-posedness is still an open problem, which is also discussed in \cite{BHP21, BP23}. In more recent mathematical literature, it is common to only consider $\lambda$ a constant, for simplicity taken generally as $\lambda=1$.
	
	Compared to the original BGK equation, the class of equations that we consider are more straightforward to study. There are two fundamental reasons for this. Firstly, the form of the collision operator means that we expect a maximum principle to hold for this class of equations. This allows us to control the nonlinearity. Secondly, the fact that we only have one collision invariant means that we can obtain, in the limit, a hydrodynamic equation which only depends on mass.

	The class of equations \eqref{eq:BGK} also falls into a large class of kinetic transport equations, called the \emph{run and tumble} equations in mathematical biology. The run and tumble equations, introduced in \cite{A80, S74}, are used in modelling the movement of bacteria that respond to chemical gradients. The collision rate $\lambda$ in \eqref{eq:BGKfull} corresponds to the tumbling rate of bacteria. Although, it is common to assume $\lambda$ to be a constant to ease the mathematical analysis, in physically more relevant cases $\lambda$ depends on the gradient of the chemoattractant density function which solves a Poisson-type equation \cite{ODA88}. On the other hand, the run and tumble equation differs from similar kinetic equations \eqref{eq:BGK} in various ways, such as the confinement mechanism and non-explicit equilibrium solutions \cite{Y23}. Under some conditions on the initial mass, it may exhibit finite time extinction of solutions \cite{BC09}, similar to its diffusive limit, Keller-Segel equation. See also \cite{EY23, EY24} for recent hypocoercivity results on various run and tumble equations. 
	
	We can also compare Equation \eqref{eq:BGK} to a similar class of kinetic Fokker-Planck type equations which are given by 
	\begin{equation} \label{eq:fp}
		\p_t f (t,x,v)+ v \cdot \nabla_x f (t,x,v)= \rho_f^\alpha (t,x)\nabla_v \cdot \lp\nabla_v f(t,x,v) + vf (t,x,v)\rp.
	\end{equation} This equation shares some similarities with the Landau equation which is a model for plasma dynamics. Equations of this type have been studied in \cite{IM21, AZ24} as part of the program to utilise De Giorgi methods in kinetic theory.

	\paragraph{The limit equations.}
	Equation \eqref{eq:mainparabolic} is known as the porous medium equation when $\alpha < 0$ and as the fast diffusion equation when $\alpha \in (0,1]$. The case where $\alpha = 0$ is the heat equation. In the case of solutions with finite mass on the whole space, these equations behave very differently in different parameter regimes. We briefly review these different behaviours as they form part of the motivation for the present and future works.
	
	For the fast diffusion equation, when $\alpha \in (0, 2/d)$, we expect the equation to be well posed and for the solutions to become smooth instantaneously. If $\alpha > 2/d$ then the diffusive effect is so large so that the equation effectively loses mass instantaneously and, thus, is ill-posed. In the critical case when $\alpha = 2/d$, the equation has non-unique solutions and extinction of solutions in finite time. We refer the reader to the lecture notes \cite{D19} for all these facts and detailed explanations. An interesting case is in dimension $d=2$ with the critical value $\alpha=1$. In this case, the equation is known as the \emph{logarithmic diffusion equation} and given by 
	\begin{align*}
		\p_t \rho (t,x)= \Delta_x \log(\rho(t,x)).
	\end{align*} The solutions to the logarithmic diffusion equation exhibit extinction in finite time, and tend to resemble these self-similar forms near extinction. A good reference for the behaviour of the logarithmic diffusion equation is \cite{VER96}.
	
	The porous medium equation is well-posed for any $\alpha <0$. However, the solutions will not be smooth. In fact if the initial data is compactly supported then this will be propagated by the equation. The solutions are then smooth inside the support and H\"older continuous at the boundary of the support \cite{V06}. The literature about the porous medium equation is huge and varied. A key reference is \cite{Vazquez-PME}. Only slightly less has been written about the fast diffusion equation, one could start with \cite{V06} and references therein. Much recent work has been done on the connection to stability of functional inequalities, see, for example \cite{BDNS23}.
	
	\paragraph{Formal diffusive limit.}
	In the subsequent sections, we justify the diffusive limit rigorously. Here, in order to better understand how this comes about we perform a formal analysis of the diffusive limit by a Hilbert series expansion. That is, forgetting issues of convergence of the series, we assume that we can expand the solution $f_{\eps}$ as a formal power series in $\eps$ over the ring of test functions. In this purely formal expansion, each coefficient corresponds to the effect of an order of $\eps$ on the solution. It turns out that only effects up to order one in $\eps$ have an effect on our solution in this case.
	
	We thus write $f_{\eps}=f_{0}+\eps f_{1}+\mc O(\eps^{2})$ where $f_{0}, f_{1}\in C^{\infty}_{c}(\T^{d}\times\R^{d})$. Here, $f_{0}$ is the part of the solution that is independent of $\eps$, and $f_{1}$ corresponds to the first order effects in $\eps$. Integrating this expansion in space, we have $\rho_{f_\eps}=\rho_{0}+\eps \rho_{1}+\mc O(\eps^{2})$ where $\rho_{i}:=\int_{\T^{d}}f_{i}\d x$ for $i=\{0,1\}$. Substituting these expansions into \eqref{eq:mainkinetic} and letting $\eps\rightarrow0$ we immediately find $f_{0}=\mathcal{M}\rho_{0}$. 
	
	Assuming the boundedness from below of $\rho_{0}$, we now expand the power law as a binomial series to obtain
	\begin{equation*}
		\rho_{f_{\eps}}^{\alpha}=\rho_{0}^{\alpha}\Bigl(1+\eps\frac{\rho_{1}}{\rho_{0}}+O(\eps^{2})\Bigr)^{\alpha}=\rho_{0}^{\alpha}+\alpha \eps\rho_{0}^{\alpha-1}\rho_{1}+ \mc O(\eps^{2}).
	\end{equation*}
	Thus, the order $\eps$ terms in \eqref{eq:mainkinetic} give
	\begin{equation*}
		v\cdot\nabla_{x}f_{0}=\rho_{0}^{\alpha}\bigl(\mathcal{M}\rho_{1}-f_{1}\bigr)+\alpha\rho_{0}^{\alpha-1}\rho_{1}\bigl(\mathcal{M}\rho_{0}-f_{0}\bigr)=\rho_{0}^{\alpha}\bigl(\mathcal{M}\rho_{1}-f_{1}\bigr),
	\end{equation*}
	so we find that $f_{1}=\mathcal{M}\rho_{1}-v\mathcal{M}\cdot\rho_{0}^{-\alpha}\nabla_{x}\rho_{0}$. Turning now to the local density of $f_{\eps}$, we integrate \eqref{eq:mainkinetic} in velocity and substitute in our formal asymptotic expansions as well as our expressions for $f_{0}$ and $f_{1}$ to reach
	\begin{equation*}
		\p_t\rho_{0}=-\nabla_{x}\cdot\int_{\R^{d}}vf_{1}\d v = -\nabla_{x}\cdot\Bigl(\rho_{1}\int_{\R^{d}}v\mathcal{M}\d v-\rho_{0}^{-\alpha}\nabla_{x}\rho_{0}\int_{\R^{d}}|v|^{2}\mathcal{M}\d v\Bigr)=\nabla_{x}\cdot\bigl(\rho_{0}^{-\alpha}\nabla_{x}\rho_{0}\bigr).
	\end{equation*}
	Thus, we formally conclude that as $\eps\rightarrow0$, $f_{\eps}$ tends to a Maxwellian distribution times a density function solving a non-linear diffusion equation, i.e.,
	\begin{equation*}
		f_{0}=\mathcal{M}\rho_{0},\qquad \partial_{t}\rho_{0}=\nabla_{x}\cdot\bigl(\rho_{0}^{-\alpha}\nabla_{x}\rho_{0}\bigr).
	\end{equation*}

	\subsection{Motivations and open questions}
	
	The goal of our current paper is to give a full study of Equation \eqref{eq:BGK} in terms of well-posedness, long-time behaviour and diffusive limit. Despite having a superlinear term, we show that this equation is extremely tractable and well behaved. Moreover, it provides a derivation of the porous medium and fast diffusion equations. While our equation does not directly come from a biological model, it is close to those used in many biological contexts.  The passage from kinetic equations to the porous medium or fast diffusion equations is not a well-studied area and our main motivation is to contribute to this.
	\medskip 
	
	We are also strongly motivated by the potential next steps: 
	\begin{itemize}
		
		\item \textbf{Hydrodynamic limits in the whole space.} We would like to study the same hydrodynamic limit and long time behaviour when the equation is posed in the whole space. The main technical challenge here is that for the physical finite mass solutions we will lose the uniform lower bound. We are particularly interested in the critical parameter regime where the limit equation is badly behaved. In particular, in dimension $d=2$, the most physical nonlinearity of $\alpha =1$ will produce a limit equation whose solutions are not unique and will become extinct in finite time. We expect the equations to be well-behaved at the level of kinetic equations. For this reason, we are interested to study the emergence of pathological behaviour in the limit. 
		
		\item \textbf{Derivation of cross diffusion equations}. We have formal calculations which show a similar class of run and tumble type equations may converge towards cross-diffusion equations. We are not aware of any results linking kinetic equations to cross diffusions in this way. Here the mathematical challenge becomes much greater. We do not only lose lower bounds, we also lose smoothness of solutions to the limit equations. 
		
		\item \textbf{Conditional results for the quadratic BGK equation}. A more tenuous motivation is to study equations which are similar to the full BGK equation with quadratic nonlinearity. The equations studied here are much simpler. However, we have some hope that they might provide some clues to the BGK equation in conditional regimes.
		
		\item \textbf{Linking long-time behaviour for the kinetic and limit equations}. Lastly, we would like to understand how hypocoercivity results for kinetic equations are influenced by their hydrodynamic limits. The fast diffusion equation is an example of a nonlinear equation with a nice entropy structure. Therefore, kinetic equations with this diffusive limit are well adapted to exploring this question.
	\end{itemize}
	
	\subsection{State of the art}
	Obtaining heuristically macroscopic PDEs as hydrodynamic limits of kinetic equations is a subject of many works. However, we are aware of only a small number of articles studying this with a rigorous limiting arguments. We summarise the articles that are closer to ours.  
	
	In a series of papers \cite{BGP87, BGPS88, BGP89}, Bardos, Golse, Perthame and Sentis study the Rosseland approximation for the radiative transfer equation which is a nonlinear kinetic transport equation similar to \eqref{eq:BGK}. This equation models the transport of photons in a starlike medium. The term corresponding to the collision rate $\lambda(\rho) :=\rho^\alpha $ in \eqref{eq:BGK} is the opacity of the medium $\sigma(T)$ at the internal energy $T$. In \cite{BGP87, BGPS88, BGP89}, the authors derive the so-called Rosseland equation as a hydrodynamic limit of the radiative transfer equation. The Rosseland equation is a degenerate parabolic equation similar to \eqref{eq:mainparabolic} with $\alpha = 1$ and a constant boundary value. Particularly \cite{BGPS88} covers partially the case corresponding to $-1<\alpha<0$ in \eqref{eq:BGK} without requiring the monotonicity of $\sigma$. They employ Schauder's fixed point argument, some compactness results and energy estimates to prove the well-posedness of the kinetic equation.
	
	In \cite{DMOS07}, Dolbeault, Markovich, Oelz and Schmeiser study a different type of kinetic transport equation with a nonlinear relaxation term towards a generalised local Gibbs state. The equation is posed in $(x,v) \in (\R^3 \times \R^3)$ with a given confining potential. In the diffusive limit, they obtain drift-diffusion equations with a nonlinear diffusion term of porous medium type, corresponding to the case $-\frac{2}{3} < \alpha <1$ in \eqref{eq:mainparabolic}. The drift term arises from the confining potential. They prove the existence and uniquness of the solutions of the kinetic equation for initial data bounded by equilibrium distributions. The diffusion limit is obtained by using a div-curl lemma based compactness argument; thus, is not quantitative. 
	
	More recently, Anceschi and Zhu, in \cite{AZ24}, study the Cauchy problem and diffusion asymptotics of a nonlinear kinetic Fokker-Planck type equation \eqref{eq:fp} where $\alpha \in [0,1]$, and $(x,v) \in \T^d \times \R^d$. They prove well-posedness and propagation of Maxwellian lower bounds of the solutions by using
	some results based on De Giorgi-Nash-Moser theory and the Harnack inequality. They derive quantitative diffusion asymptotics by combining entropic hypocoercivity, relative $\phi$-entropy, and barrier function methods. In this paper, we will follow their broad strategy for showing the quantitative diffusive limit. The key difference to our work is that the kinetic Fokker-Planck equation has smooth solutions.
	
	\paragraph{Hydrodynamic limits in kinetic theory.}
	
	Hydrodynamic limits make up a large area of active research in kinetic theory, thanks to its connection to Hilbert's sixth problem entitled \emph{"Mathematical Treatment of the Axioms of Physics"} which concerns developing rigorously the limiting processes connecting the atomistic description of the matter with the laws of motion of continua. Therefore, the most classical problem in this area is justifying the limit from the Boltzmann equation to either Euler or Navier-Stokes equations. Breakthrough results are due to Bardos, Golse and Levermore in \cite{BGL91, BGL93} and then due to Golse and Saint-Raymond in \cite{GSR04}. For a good review in this direction we refer the reader to \cite{SR09}. In \cite{SR93}, Saint-Raymond provided a complete derivation of the Navier–Stokes–Fourier equations from a BGK equation \eqref{eq:BGKfull} when $\lambda$ is a constant. A major obstacle in resolving Hilbert's sixth problem was the long-time justification of the Boltzmann equation. Recent works of Deng, Hani and Ma establishes a Boltzmann-Grad derivation valid up to the lifespan of smooth Boltzmann solutions in \cite{DHM24}, and building on this and existing hydrodynamic-limit results, derive the compressible Euler and incompressible Navier–Stokes–Fourier systems on $\T^d, \, d=\{2,3\}$ in \cite{DHM25}. These results represent the current state of the art on Hilbert's sixth problem.

	Hydrodynamic and diffusive limits are also an important area of study in kinetic theory applied to mathematical biology. Here, the limit equation is often related to a nonlinear diffusion equation. Various limits from run and tumble equations under different scalings, including the Keller-Segel equation were shown in \cite{OH00, OH02, CMPS04}. Moreover, interesting recent works about the phase-transition phenomena occurring in the Vicsek-BGK type kinetic equations where the collision operator models the alignment of agents under stochastic perturbations include \cite{DDFM20, MSW24}. See also, for example \cite{EMP23}, for limits from kinetic equations to higher order equations such as Cahn-Hilliard equation.
	
	\paragraph{Particle approximations to the porous medium or fast diffusion equations.}
	
	In contrast to the derivation from kinetic equations, there are a large number of works deriving the porous medium and fast diffusion equations directly from interacting stochastic particle systems. These studies are driven by both the motivation to understand how such systems might emerge in biological phenomena and to develop numerical methods for these equations. Deriving diffusion equations from particle approximations  remains a very active area of research dating back to \cite{O90} which is one of the first works in this area. A recent paper \cite{CEW24} contains an up to date set of references. An important research direction is the `blob-method' which uses these ideas for building numerical methods, see \cite{CCP19}.
	
	\section{Preliminary and main results}
	
	\subsection{Main results} 
	\label{sec:ass-main_results}
	We present our main results with three theorems. In what follows, we denote the weighted $L^2$ norm of a function $f$ as $\|f\|_{L^2_{x,v}(\mc M ^{-1})} := \lp \ixv |f(t,x,v)|^2 \mc M^{-1} (v)\d v \d x\rp^{\frac{1}{2}}$. 
	\begin{thm}[Well-posedness, propagation of bounds]\label{thm:well-posedness}
		Let $\eps>0, A>1, \alpha\in\R$, and suppose that $f_{\eps, \mathrm{in}}(x,v)\in L^{1}(\T^{d}\times\R^{d})$ satisfy $A^{-1}\mc M (v)\leq f_{\eps, \mathrm{in}}(x,v)\leq A \mc M (v)$. Then problem \eqref{eq:mainkinetic} with initial data $f_{\eps, \mathrm{in}}$ admits a unique weak solution $f_{\eps} (t,x,v)\in C([0,\infty);L^{1}(\T^{d}\times\R^{d}))$. Moreover, $f_{\eps}(t,x,v)$ satisfies $A^{-1} \mc M (v)\leq f_{\eps}(t, x,v)\leq A  \mc M (v)$.
	\end{thm}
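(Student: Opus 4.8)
The plan is to recast \eqref{eq:mainkinetic} in mild (Duhamel) form along the free-transport characteristics and to run a fixed-point argument on the set of functions trapped between the two Maxwellian barriers, so that the a priori bounds and the nonlinearity are controlled simultaneously. Fix a candidate density $\rho=\rho(t,x)$ and, along the backward characteristic $s\mapsto x-\tfrac{t-s}{\eps}v$ (read modulo $\T^{d}$), set $a_{\rho}(s):=\eps^{-2}\rho^{\alpha}\lp s,\,x-\tfrac{t-s}{\eps}v\rp$. Writing the collision term as $\eps^{-2}\rho^{\alpha}\lp\rho\mc M-f\rp=a_{\rho}\,\rho\,\mc M-a_{\rho}f$, the equation becomes a linear ODE along characteristics, whose integration gives
\[
f[\rho](t,x,v)=f_{\eps,\mathrm{in}}\lp x-\tfrac{t}{\eps}v,\,v\rp e^{-\int_{0}^{t}a_{\rho}}+\mc M(v)\int_{0}^{t}a_{\rho}(s)\,\rho\lp s,\,x-\tfrac{t-s}{\eps}v\rp e^{-\int_{s}^{t}a_{\rho}}\d s .
\]
This defines a map $\Phi:\rho\mapsto\rho_{f[\rho]}=\iv f[\rho]\dv$, whose fixed point is the solution we seek.

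First I would establish preservation of the Maxwellian bounds, which is the structural heart of the argument. The identity $e^{-\int_{0}^{t}a_{\rho}}+\int_{0}^{t}a_{\rho}(s)e^{-\int_{s}^{t}a_{\rho}}\d s=1$ exhibits $f[\rho]/\mc M$ pointwise as a convex combination of $f_{\eps,\mathrm{in}}/\mc M$ and of the values $\rho(s,\cdot)$ along the characteristic. Hence if $A^{-1}\mc M\le f_{\eps,\mathrm{in}}\le A\mc M$ and $A^{-1}\le\rho\le A$, then $A^{-1}\mc M\le f[\rho]\le A\mc M$; integrating in $v$ (and using $\iv\mc M\dv=1$) yields $A^{-1}\le\Phi(\rho)\le A$. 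Thus $\Phi$ preserves the closed convex set $\mathcal X:=\set{\rho:\,A^{-1}\le\rho\le A}$, and any fixed point automatically satisfies the claimed bounds.

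Next I would prove contraction on $\mathcal X$. Restricted to the compact interval $[A^{-1},A]$, which is bounded away from $0$ and from $\infty$, the maps $\rho\mapsto\rho^{\alpha}$ and $\rho\mapsto\rho^{\alpha+1}$ are Lipschitz with constants depending only on $A$ and $\alpha$; the exponential weights take values in $[0,1]$ and are likewise Lipschitz in $\rho$. Estimating the difference of two Duhamel representations, using a measure-preserving change of variables in $x$ on the torus to move the characteristic shift off the density, gives for $\rho_{1},\rho_{2}\in\mathcal X$
\[
\norm{\Phi(\rho_{1})(t)-\Phi(\rho_{2})(t)}_{L^{1}(\T^{d})}\le C(A,\alpha,\eps)\int_{0}^{t}\norm{\rho_{1}(s)-\rho_{2}(s)}_{L^{1}(\T^{d})}\d s .
\]
For a short interval $[0,T]$ with $T=T(A,\alpha,\eps)$ this makes $\Phi$ a contraction on $C([0,T];L^{1}(\T^{d}))\cap\mathcal X$, so Banach's fixed-point theorem yields a unique solution on $[0,T]$ obeying the Maxwellian bounds, with $C([0,T];L^{1})$ regularity read off from the mild formula.

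Finally, global existence follows by iteration: since the bounds propagate, $f(T,\cdot,\cdot)$ again lies in $[A^{-1}\mc M,\,A\mc M]$, so the contraction time $T$ depends only on $A,\alpha,\eps$ and can be reapplied indefinitely, producing a unique global solution in $C([0,\infty);L^{1})$ satisfying the bounds for all $t$. The main obstacle—and the reason the two assertions of the theorem must be proved together rather than separately—is the superlinearity: for general $\alpha\in\R$ the rate $\rho^{\alpha}$ is singular as $\rho\to0$ (when $\alpha<0$) or as $\rho\to\infty$ (when $\alpha>0$), so the nonlinearity is Lipschitz only after $\rho$ is confined to $[A^{-1},A]$. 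It is precisely the maximum-principle structure, namely that the Duhamel representation is a convex combination relaxing toward $\rho\mc M$ with $\rho\in[A^{-1},A]$, that traps $\rho$ in this compact range and thereby tames the superlinear term; building the bounds directly into the fixed-point set $\mathcal X$, rather than assuming them a priori, is the delicate point to get right.
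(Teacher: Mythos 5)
Your proposal is correct and follows essentially the same strategy as the paper: freeze the density, solve the resulting linear transport equation along characteristics, show the frozen-coefficient solution map preserves the trapped set between the Maxwellian barriers (your convex-combination identity is exactly the explicit characteristics solution the paper uses for $r=A\mathcal{M}-g$ and $r=g-A^{-1}\mathcal{M}$), obtain a short-time contraction in $L^{1}$ from the Lipschitz bounds on $z\mapsto z^{\alpha}$ and $z\mapsto z^{\alpha+1}$ on $[A^{-1},A]$, and iterate since the contraction time depends only on $A,\alpha,\eps$. The only differences are cosmetic: you run the fixed point on $\rho$ via the explicit Duhamel formula and work directly with the scaled equation, whereas the paper runs it on $f$ in the set $\mathcal{V}$ for a general rate $\lambda$ satisfying \eqref{hyp:lambda} and then specialises $\lambda(z)=z^{\alpha}$ and rescales.
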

	
	\begin{thm}[Long-time behaviour] \label{thm:longtime}
		Let $\eps>0, A>1, \alpha\in\R$, and let $ f_\eps(t,x,v)$ be the solution to \eqref{eq:mainkinetic} with $\|f_{\eps}\|_{L^{1}_{x,v}}=1$. Suppose that  $\rho_{f_\eps} (t,x)= \int_{\R^d}f_\eps(t,x,v)\d v$ satisfies $ A^{-1} \leq \rho_{f_\eps} (t,x) \leq A$ for all $(t,x) \in [0, \infty)\times \T^d$. Then, there exists $\eps_0 >0$ such that for all $\eps \in (0, \eps_0), t\geq0$, we have
		\begin{align} \label{thm:f_bound}
			\|f_\eps (t,x,v)-\mc M(v)\|^2_{L_{x,v}^2(\mathcal{M}^{-1})} \leq 3 e^{-\gamma t} \|f_{\eps,\mathrm{in}}(x,v)-\mc M(v)\|^2_{L_{x,v}^2(\mathcal{M}^{-1})}, 
		\end{align} where
		\begin{align*}
			\gamma = \frac{1}{8A^\alpha (2+ A^{2\alpha})}. 
		\end{align*}
	\end{thm}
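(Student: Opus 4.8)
The plan is to run the $L^2$-hypocoercivity scheme of \cite{DMS15} in the weighted space $L^2_{x,v}(\mc M^{-1})$, with the density-dependent rate $\rho_{f_\eps}^\alpha$ treated as a uniformly bounded multiplier. Setting $h := f_\eps - \mc M$ and using $\|f_\eps\|_{L^1_{x,v}}=1$ together with $\int_{\R^d}\mc M\dv = 1$, the perturbation has zero total mass. Writing $\mathsf T := v\cdot\grad_x$ for the transport operator and $\Pi h := \rho_h\,\mc M$, with $\rho_h := \int_{\R^d} h\,\dv$, for the orthogonal projection onto local equilibria, equation \eqref{eq:mainkinetic} becomes
\begin{equation*}
\partial_t h + \tfrac1\eps\mathsf T h = -\tfrac1{\eps^2}\,\rho_{f_\eps}^\alpha\,(1-\Pi)h,
\end{equation*}
where we used $\mathsf T\mc M = 0$ and $(1-\Pi)\mc M = 0$, so that $\mc M$ is the global equilibrium. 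Throughout, $\langle\cdot,\cdot\rangle$ and $\|\cdot\|$ denote the inner product and norm of $L^2_{x,v}(\mc M^{-1})$, in which $\mathsf T$ is skew-adjoint and $\Pi$ is self-adjoint. The hypothesis $A^{-1}\le\rho_{f_\eps}\le A$ (equivalently, the Maxwellian bounds of Theorem \ref{thm:well-posedness} integrated in $v$) gives the two-sided control $A^{-|\alpha|}\le\rho_{f_\eps}^\alpha\le A^{|\alpha|}$, which is all we will use of the nonlinearity.

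First I would establish the structural inputs of the scheme. Since $\Pi$ is an orthogonal projection and $\rho_{f_\eps}^\alpha$ depends only on $(t,x)$, the collision term is dissipative: $\langle\rho_{f_\eps}^\alpha(1-\Pi)h, h\rangle = \int_{\T^d}\rho_{f_\eps}^\alpha\,\|(1-\Pi)h\|^2_{L^2_v(\mc M^{-1})}\dx \ge A^{-|\alpha|}\|(1-\Pi)h\|^2$, giving microscopic coercivity. For the macroscopic part, $\int_{\R^d} v_iv_j\,\mc M\dv = \delta_{ij}$ yields $\|\mathsf T\Pi h\|^2 = \int_{\T^d}|\grad_x\rho_h|^2\dx$ and $\Pi\mathsf T\Pi = 0$ (because $\int_{\R^d} v\,\mc M\dv = 0$); the Poincaré inequality on $\T^d$ applied to the mean-zero density $\rho_h$ then gives macroscopic coercivity $\|\mathsf T\Pi h\|^2 \ge \lambda\,\|\Pi h\|^2$. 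Finally the basic energy identity is $\tfrac{d}{dt}\tfrac12\|h\|^2 = -\tfrac1{\eps^2}\langle\rho_{f_\eps}^\alpha(1-\Pi)h,h\rangle \le -\tfrac{A^{-|\alpha|}}{\eps^2}\|(1-\Pi)h\|^2$, using skew-adjointness of $\mathsf T$; this controls only the microscopic component, which is the usual obstruction that hypocoercivity must overcome.

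To recover dissipation of the macroscopic component $\Pi h$, I would use the modified entropy $\mathcal F[h] := \tfrac12\|h\|^2 + \delta\eps\,\langle\mathsf A h,h\rangle$ for a small parameter $\delta>0$, where $\mathsf A := (1+(\mathsf T\Pi)^*\mathsf T\Pi)^{-1}(\mathsf T\Pi)^*$ is the Dolbeault–Mouhot–Schmeiser operator. In the present setting $(\mathsf T\Pi)^*\mathsf T\Pi$ acts as $-\Delta_x$ on densities, so $\mathsf A$ has the explicit form $\mathsf A h = -(1-\Delta_x)^{-1}(\grad_x\cdot j_h)\,\mc M$ with $j_h := \int_{\R^d} v\,h\dv$; in particular $\mathsf A$ ranges in the image of $\Pi$ and satisfies $\|\mathsf A\|\le\tfrac12$, so that $\tfrac12(1-\delta\eps)\|h\|^2 \le \mathcal F[h] \le \tfrac12(1+\delta\eps)\|h\|^2$ and the equivalence ratio is at most $3$ once $\delta\eps\le\tfrac12$. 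The $\eps$-weight is chosen precisely so that the transport part of $\tfrac{d}{dt}\mathcal F$ produces, to leading order, the $\eps$-independent macroscopic dissipation $-\delta\,\langle(1+B)^{-1}B\,\Pi h,\Pi h\rangle$, where $B := (\mathsf T\Pi)^*\mathsf T\Pi$, which is bounded above by $-\delta\tfrac{\lambda}{1+\lambda}\|\Pi h\|^2$.

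It then remains to show that all remaining contributions to $\tfrac{d}{dt}\mathcal F$ are lower order. These are the transport error $-\delta\,\langle\mathsf A\mathsf T(1-\Pi)h,h\rangle$ and the collision error $-\tfrac\delta\eps\langle\mathsf A\,\rho_{f_\eps}^\alpha(1-\Pi)h,h\rangle$ (together with their adjoint counterparts). Using $\|\mathsf A\|\le\tfrac12$ and the boundedness of $\mathsf A\mathsf T(1-\Pi)$ (which holds because the order $-2$ operator $(1-\Delta_x)^{-1}$ compensates the two $x$-derivatives), each is bounded by a constant times $\|(1-\Pi)h\|\,\|h\|$, with the constant controlled by $\sup\rho_{f_\eps}^\alpha\le A^{|\alpha|}$; splitting $\|h\|\le\|\Pi h\|+\|(1-\Pi)h\|$ and applying Young's inequality distributes these between the macroscopic dissipation (with an $\eps$-independent weight) and the microscopic dissipation (which carries the large factor $\eps^{-2}$). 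Here lies the only genuinely nonlinear point and the main obstacle: because $\rho_{f_\eps}^\alpha$ depends on $x$, it does not commute with the $x$-nonlocal operators $\mathsf T$ and $\mathsf A$, so one cannot reduce the problem to decoupled Fourier modes as in the constant-rate case; the resolution is that the rate enters only as a bounded multiplier acting on the dissipated component $(1-\Pi)h$, whence the uniform bounds $A^{-|\alpha|}\le\rho_{f_\eps}^\alpha\le A^{|\alpha|}$ suffice to close every estimate. Choosing $\delta$ small enough (depending only on $A,\alpha,d$) and then $\eps<\eps_0$ small enough so that the microscopic term absorbs the residual $\|(1-\Pi)h\|^2$ contributions yields $\tfrac{d}{dt}\mathcal F\le-\gamma\mathcal F$; Grönwall's inequality and the norm equivalence (bounded by $3$) then give \eqref{thm:f_bound}, with the explicit value of $\gamma$ obtained by tracking the constants $A^{\pm|\alpha|}$ and the torus Poincaré constant through the optimisation.
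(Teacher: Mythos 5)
Your proposal is correct and takes essentially the same route as the paper: the paper implements the same Dolbeault--Mouhot--Schmeiser scheme in concrete moment form, with your auxiliary term $\delta\eps\langle \mathsf{A}h,h\rangle$ appearing there precisely as $\frac{\eps}{A^{\alpha}(2+A^{2\alpha})}\int_{\T^{d}} j_\eps\cdot(I-\Delta_x)^{-1}\nabla_x\rho_\eps\dx$, whose time derivative is controlled via the hydrodynamic equations (Lemma \ref{lem:hydroeqs}), the Fourier-multiplier estimates (Lemma \ref{lem:Fourier_est}), and the flux/energy bounds (Lemma \ref{lem:flux,energy-bounds}). Your key structural observations --- that the nonlinearity enters only as a bounded multiplier on $(1-\Pi)h$ controlled by $A^{-|\alpha|}\leq\rho_{f_\eps}^{\alpha}\leq A^{|\alpha|}$, and that the norm equivalence yields the factor $3$ --- are exactly how the paper closes the Gr\"onwall argument.
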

	
	\begin{thm}[Diffusion asymptotics] \label{thm:asymptotics}
		Let $A>1$ and $\eps\in(0,\min\{\eps_{0},\frac{1}{2}\})$ with $\eps_{0}$ given by Theorem \ref{thm:longtime}. Consider a sequence of functions $\{f_{\eps,\mathrm{in}}\}\subset L^{1}(\T^{d}\times\R^{d})$ satisfying $A^{-1}\mc M (v)\leq f_{\eps,\mathrm{in}}(x,v)
		\leq A \mc M (v)$ and let $f_{\eps}(t,x,v)$ be the solution of \eqref{eq:mainkinetic} with initial data $f_{\eps,\mathrm{in}}(x,v)$. Let $\rho_{\mathrm{in}}(x)\in C^{2}(\T^{d})$ satisfy $A^{-1}\leq\rho_{\mathrm{in}}(x)\leq A$ and $\tilde \rho(t,x)$ be the solution of \eqref{eq:mainparabolic} with initial data $\rho_{\mathrm{in}}(x)$. If there exists some constant $\eps'\in(0,\frac{1}{2})$ such that
		\begin{equation*}
			\|f_{\eps,\mathrm{in}}(x,v)-\rho_{\mathrm{in}}(x)\mathcal{M}(v)\|_{L_{x,v}^{2}(\mathcal{M}^{-1})}\leq\eps',
		\end{equation*}
		then there exist constants $C,\gamma>0$ depending only on $\alpha, A, \|\rho_{\mathrm{in}}\|_{C^{2}}, d$ such that
		\begin{equation*}
			\|f_{\eps}(t,x,v)-\tilde \rho(t,x)\mathcal{M}(v)\|_{L^{\infty}(\R_{+};L_{x,v}^{2}(\mathcal{M}^{-1}))}\leq C(\eps+\eps')^{\gamma}.
		\end{equation*}
	\end{thm}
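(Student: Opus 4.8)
My plan is to split the time axis into a short-time regime, where I prove a quantitative comparison between $f_\eps$ and $\tilde\rho\,\mathcal M$ with a constant that is allowed to grow exponentially in $t$, and a long-time regime, where I exploit the fact that both the kinetic solution and the limit density relax exponentially to their (common, up to an $O(\eps')$ mass mismatch) equilibrium. Optimising the splitting time then converts the exponentially growing short-time rate into the fractional power $(\eps+\eps')^{\gamma}$ appearing in the statement; this time-interpolation is exactly what forces $\gamma<1$.

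For the finite-time estimate I would introduce the well-prepared ansatz $F_\eps:=\tilde\rho\,\mathcal M-\eps\,\tilde\rho^{-\alpha}\mathcal M\,v\cdot\nabla_x\tilde\rho$, i.e. $\tilde\rho\,\mathcal M$ plus the first-order corrector $f_1$ from the formal expansion (chosen so that $\rho_{F_\eps}=\tilde\rho$ and the $O(1/\eps)$ part of the kinetic residual cancels). Writing $g_\eps:=f_\eps-F_\eps$, I would estimate $\tfrac12\tfrac{d}{dt}\|g_\eps\|_{L^2_{x,v}(\mathcal M^{-1})}^2$. Three structural facts drive the computation: (i) the transport operator $\tfrac1\eps v\cdot\nabla_x$ is skew-adjoint in $L^2_{x,v}(\mathcal M^{-1})$ on $\T^d$, so it drops out; (ii) writing the collision operator as $Q(f)=-\rho_f^{\alpha}(\mathrm{Id}-\Pi)f$ with $\Pi f=\rho_f\mathcal M$ the velocity-averaging projection, the bounds $A^{-1}\le\rho_{f_\eps},\tilde\rho\le A$ from Theorem \ref{thm:well-posedness} and the maximum principle for \eqref{eq:mainparabolic} give the coercive contribution $-\tfrac{c_0}{\eps^2}\|(\mathrm{Id}-\Pi)g_\eps\|^2$ with $c_0=A^{-|\alpha|}$; and (iii) the consistency error $R_\eps$ and the nonlinear cross term $\eps(\rho_{f_\eps}^{\alpha}-\tilde\rho^{\alpha})f_1$ are both purely microscopic (zero velocity average), so pairing them against $g_\eps$ only sees $(\mathrm{Id}-\Pi)g_\eps$. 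A Lipschitz bound for $s\mapsto s^{\alpha}$ on $[A^{-1},A]$ together with Young's inequality then absorbs these into the $\tfrac1{\eps^2}$-dissipation at the cost of a term $C'\|g_\eps\|^2+C\eps^2$, and Grönwall yields $\|f_\eps(t)-\tilde\rho(t)\mathcal M\|_{L^2_{x,v}(\mathcal M^{-1})}\le C e^{Ct}(\eps+\eps')$, with constants depending only on $\alpha,A,\|\rho_{\mathrm{in}}\|_{C^2},d$.

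For the long-time regime I would use Theorem \ref{thm:longtime} to obtain $\|f_\eps(t)-\mathcal M\|_{L^2_{x,v}(\mathcal M^{-1})}\le C e^{-\gamma t/2}$ uniformly in $\eps\in(0,\eps_0)$, and prove the analogous decay $\|\tilde\rho(t)-\bar\rho\|_{L^2_x}\le C e^{-\lambda t/2}$ for the limit equation by testing \eqref{eq:mainparabolic} against $\tilde\rho-\bar\rho$: the dissipation $\int\tilde\rho^{-\alpha}|\nabla_x\tilde\rho|^2\ge A^{-|\alpha|}\|\nabla_x\tilde\rho\|_{L^2}^2$ combined with the Poincaré inequality on $\T^d$ gives exponential convergence to the mean $\bar\rho$. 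Since the two equilibria coincide up to the $O(\eps')$ mass mismatch, the triangle inequality gives $\|f_\eps(t)-\tilde\rho(t)\mathcal M\|\le C_2 e^{-c_2 t}+C\eps'$ on this regime, with $c_2=\tfrac12\min(\gamma,\lambda)$. Finally I would fix the splitting time $T=\tfrac{1}{C+c_2}\log\tfrac{1}{\eps+\eps'}$, use the short-time bound for $t\le T$ and the long-time bound for $t\ge T$; both contributions equal $C(\eps+\eps')^{c_2/(C+c_2)}$, giving the $L^\infty$-in-time bound with exponent $\gamma=c_2/(C+c_2)$.

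The main obstacle I anticipate is obtaining uniform-in-time control of the ansatz and the residual: the corrector $f_1$ and the error $R_\eps$ involve $\nabla_x\tilde\rho$, $\nabla_x^2\tilde\rho$ and $\partial_t\nabla_x\tilde\rho$, so I need bounds on $\tilde\rho$ in a parabolic Hölder/Sobolev norm that are uniform in $t$ and depend only on $\alpha,A,\|\rho_{\mathrm{in}}\|_{C^2},d$. Because \eqref{eq:mainparabolic} is uniformly parabolic on the invariant region $[A^{-1},A]$ with smooth nonlinearity, these should follow from Schauder/parabolic regularity together with the exponential relaxation, but ensuring the constants are genuinely independent of $t$ (so that the Grönwall constant above does not itself grow) is the delicate point. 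A secondary bookkeeping issue is the $O(\eps')$ discrepancy between the conserved masses of $f_\eps$ and $\tilde\rho$, which must be tracked through the triangle inequalities but is harmless since it is of the order of the target accuracy.
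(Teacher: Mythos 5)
Your proposal is correct and reaches the theorem, but your finite-time estimate takes a genuinely different route from the paper's. The paper (Proposition \ref{prp:short-time}) introduces no corrector: it works with $h_{\eps}=f_{\eps}/\mathcal{M}$ and estimates $\|h_{\eps}-\tilde\rho\|_{L^{2}_{x,v}(\mathcal{M})}^{2}$ directly, completing a square via $Q_{\eps}=\frac{1}{\eps}\rho_{\eps}^{\alpha/2}j_{\eps}+\tilde\rho^{-\alpha/2}\nabla_{x}\tilde\rho$, isolating the remainder $R_{\eps}=\nabla_{x}\rho_{\eps}+\frac{1}{\eps}\rho_{\eps}^{\alpha}j_{\eps}=-\nabla_{x}\cdot E_{\eps}-\eps\,\partial_{t}j_{\eps}$, integrating the derivatives by parts in $t$ and $x$ onto the smooth test function $\tilde\rho^{-\alpha}\nabla_{x}\tilde\rho$, and controlling the time-integrated flux and energy through $\int_{0}^{t}\|h_{\eps}-\rho_{\eps}\|_{L^{2}_{x,v}(\mathcal{M})}^{2}\,\mathrm{d}s\lesssim\eps^{2}$, which it extracts from the dissipation in the proof of Theorem \ref{thm:longtime}. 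This gives $e^{Ct}\bigl(\|h_{\mathrm{in}}-\rho_{\mathrm{in}}\|^{2}+C'\eps(1+t^{1/2})\bigr)$, i.e.\ only order $\eps^{1/2}$ in norm, whereas your Hilbert-corrector ansatz $F_{\eps}=\tilde\rho\mathcal{M}+\eps f_{1}$ yields the sharper $Ce^{Ct}(\eps+\eps')$ — and your argument does close, modulo one notational slip: after dividing by $\eps^{2}$ the nonlinear cross term is $\frac{1}{\eps}\bigl(\rho_{f_{\eps}}^{\alpha}-\tilde\rho^{\alpha}\bigr)f_{1}$, not $\eps(\cdots)$, but the mechanism you describe (the term is microscopic since $\int f_{1}\,\mathrm{d}v=0$, so its pairing with $g_{\eps}$ only sees $(I-\Pi)g_{\eps}$; the Lipschitz bound for $s\mapsto s^{\alpha}$ on $[A^{-1},A]$ gives $\|\rho_{f_{\eps}}^{\alpha}-\tilde\rho^{\alpha}\|_{L^{2}_{x}}\lesssim\|g_{\eps}\|$; Young absorbs into the $\frac{c_{0}}{\eps^{2}}$ dissipation) handles exactly such a $\frac{1}{\eps}$ term, leaving $C\|g_{\eps}\|^{2}+C\eps^{2}$ as you claim. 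The regularity obstacle you flag is real but shared: the paper's integration by parts likewise needs $\partial_{t}\bigl(\tilde\rho^{-\alpha}\nabla_{x}\tilde\rho\bigr)\in L^{\infty}_{t,x}$, effectively three derivatives of $\tilde\rho$, which it takes from Lemma \ref{lem:rho_well-posedness} (uniform parabolicity on $[A^{-1},A]$, smoothing for $t>0$, and relaxation give sup-in-time bounds). Your long-time regime (Theorem \ref{thm:longtime} plus Poincar\'e for \eqref{eq:mainparabolic}) and the logarithmic splitting time $T\sim\log\frac{1}{\eps+\eps'}$ coincide with Proposition \ref{prp:long_time_asymptotics} and the paper's final optimisation; the paper normalises both masses to one, so your $O(\eps')$ mass-mismatch bookkeeping is a mild generalisation rather than a deviation.
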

	
	\begin{remark}
		The condition on the initial data simply requires that the initial data for the kinetic equation and the diffusion equation remain close in the limit $\eps \rightarrow 0$. Taking $\eps':=\eps$, this is equivalent to a more classical well-preparedness condition on the initial data.
	\end{remark}
	
	\subsection{Preliminary results on hydrodynamic quantities}
	
	We introduce the hydrodynamic quantities, spatial density $\rho_\eps$, flux $j_\eps$ and energy $E_\eps$. 
	\begin{align} \label{hydro_quantitites}
		\begin{split}
			\rho_\eps(t,x) &:= \int f_\eps(t,x,v)\d v,  \\ 
			j_\eps(t,x) &:= \int f_\eps(t,x,v) v \d v , \\ 
			E_\eps(t,x) &:= \int f_\eps(t,x,v)(v \otimes v -I) \d v .  
		\end{split}
	\end{align}
	Next, we prove two results which will be needed in the subsequent sections. 
	In the rest of the paper, whenever convenient, we omit denoting the variable dependencies to keep the notation simple. 
	\begin{lem}\label{lem:flux,energy-bounds}
		Let $f_{\eps, \mathrm{in}}\in L^{1}(\T^{d}\times\R^{d})$ and let $f_{\eps}$ be the solution of \eqref{eq:mainkinetic} associated with the initial data $f_{\eps, \mathrm{in}}$. Let $\rho_\eps, j_{\eps}, E_{\eps}$ be the associated density, flux and energy as defined in \eqref{hydro_quantitites} and take arbitrary $\beta\in\R$. Then we have
		\begin{align*}
			|j_{\eps}| \leq \lp \int_{\R^{d}}\lp \frac{f_\eps}{\mc M}-\beta\rp ^{2}\mathcal{M}\d v\rp^{\frac{1}{2}} \quad \text{and} \quad 
			|E_{\eps}|\leq \lp \int_{\R^{d}}\lp \frac{f_\eps}{\mc M}-\beta\rp ^{2}\mathcal{M}\d v\rp^{\frac{1}{2}}.
		\end{align*}
	\end{lem}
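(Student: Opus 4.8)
The plan is to exploit the two algebraic facts that make the weight $\mc M$ special. Since $\mc M$ is a centred Gaussian with identity covariance, it has vanishing first moment and normalised second moment, so that $\int_{\R^d} v\,\mc M\d v = 0$ and $\int_{\R^d}(v\otimes v - I)\,\mc M\d v = 0$. These vanishing moments are the whole reason the constant $\beta$ is free: for every $\beta\in\R$ one has $\int_{\R^d}\beta\, v\,\mc M\d v = 0$ and $\int_{\R^d}\beta\,(v\otimes v - I)\,\mc M\d v = 0$, so subtracting $\beta\mc M$ from $f_\eps$ leaves both moments unchanged. Setting $g := f_\eps/\mc M - \beta$, I would first record the identities
\[
j_\eps = \int_{\R^d} g\, v\,\mc M\d v, \qquad E_\eps = \int_{\R^d} g\,(v\otimes v - I)\,\mc M\d v,
\]
which hold for arbitrary $\beta$ precisely because of the orthogonality above.

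With $f_\eps$ written in this form, both bounds become a weighted Cauchy--Schwarz inequality in $L^2(\mc M\d v)$. For the flux I would pass to the directional formulation $|j_\eps| = \sup_{|e|=1} e\cdot j_\eps$, write $e\cdot j_\eps = \int_{\R^d} g\,(e\cdot v)\,\mc M\d v$, and apply Cauchy--Schwarz to obtain $|e\cdot j_\eps| \le \bigl(\int g^2\,\mc M\d v\bigr)^{1/2}\bigl(\int (e\cdot v)^2\,\mc M\d v\bigr)^{1/2}$. The second factor equals $e^\top\bigl(\int_{\R^d} v\otimes v\,\mc M\d v\bigr)e = |e|^2 = 1$ because $\mc M$ has identity covariance, so taking the supremum over unit vectors yields $|j_\eps| \le \bigl(\int g^2\,\mc M\d v\bigr)^{1/2}$ with constant one. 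The energy bound is the same computation against the test tensor $v\otimes v - I$: Cauchy--Schwarz reduces it to controlling $\int_{\R^d}|v\otimes v - I|^2\,\mc M\d v$, or equivalently the directional quantity $\int_{\R^d}\bigl((e\cdot v)^2 - 1\bigr)^2\,\mc M\d v$ for unit $e$, which is a Gaussian fourth moment computed via Isserlis' theorem.

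The main obstacle is purely the moment bookkeeping needed to match the stated constant. The flux case is clean and delivers the constant exactly, with no property of $f_\eps$ beyond integrability entering. For the energy I would be most careful: the Cauchy--Schwarz step produces the second moments of the entries of $v\otimes v - I$ under $\mc M$, and the only subtlety is to organise these --- through the sup-over-unit-directions formulation $|E_\eps| = \sup_{|e|=1}|e^\top E_\eps e|$ together with the explicit value of $\int_{\R^d}\bigl((e\cdot v)^2-1\bigr)^2\,\mc M\d v$ --- so that the final constant comes out as claimed rather than carrying a spurious dimensional or numerical factor. This is the single place where the choice of matrix norm and the normalisation of the weight must be handled precisely; the directional formulation is the cleanest route and is the one I would use. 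Notably, the argument nowhere invokes Theorem \ref{thm:well-posedness} or the Maxwellian bounds, since it is simply an inequality about the moments of an arbitrary integrable density tested against $\mc M$.
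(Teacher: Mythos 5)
Your proposal is correct and follows essentially the same route as the paper: insert the free constant $\beta$ via the vanishing Gaussian moments $\int_{\R^d} v\,\mc M\dv=0$ and $\int_{\R^d}(v\otimes v-I)\,\mc M\dv=0$, then apply Cauchy--Schwarz in $L^2(\mc M\dv)$; the paper writes this out only for $j_\eps$ and declares the $E_\eps$ case ``similar''. Your directional refinement $|j_\eps|=\sup_{|e|=1}e\cdot j_\eps$ is in fact more careful than the paper's own computation, which squares the vector identity and uses $\int_{\R^d}|v|^{2}\mc M\dv=1$ --- true only in $d=1$, since in general this moment equals $d$ and the componentwise Cauchy--Schwarz yields an extra $\sqrt{d}$, whereas your version, using $\int_{\R^d}(e\cdot v)^2\mc M\dv=|e|^2$, genuinely delivers constant $1$ for the flux. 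One caveat on the energy term, at precisely the spot you flagged but left open: completing your Isserlis computation gives $\int_{\R^d}\bigl((e\cdot v)^2-1\bigr)^2\mc M\dv=3-2+1=2$, so the sup-over-directions (operator-norm) bound comes out as $\sqrt{2}\,\lp\int_{\R^d}(f_\eps/\mc M-\beta)^2\mc M\dv\rp^{1/2}$ rather than with constant $1$ as stated, and a Frobenius-norm bookkeeping is worse still, giving $\sqrt{d^2+d}$. This discrepancy is a defect of the lemma's statement shared by the paper's one-line treatment of $E_\eps$ (and by its $d=1$ normalisation slip), not a failure of your method, and it is harmless downstream, since the lemma is only ever invoked up to multiplicative constants that are absorbed into $\gamma$ and $\eps_0$ in Theorems \ref{thm:longtime} and \ref{thm:asymptotics}. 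You are also right that nothing beyond integrability of $f_\eps$ (and finiteness of the right-hand side) enters; the well-posedness hypotheses are inessential for this lemma.
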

	\begin{proof}
		We prove the result only for $j_{\eps}$ and the result for $E_{\eps}$ follows in a similar way. We have
		\begin{align*}
			|j_{\eps}|^{2}&= \lp \int_{\R^{d}}\lp\frac{f_\eps}{\mc M}\rp v\mc M \d v\rp^{2} =\lp \int_{\R^{d}}\lp \frac{f_\eps}{\mc M} -\beta\rp v\mathcal{M}\d v\rp ^{2}\\
			&\leq\int_{\R^{d}}|v|^{2}\mc M \d v\cdot\int_{\R^{d}}\lp \frac{f_\eps}{\mc M} -\beta\rp ^{2}\mc M \d v \\
			&=\int_{\R^{d}}\lp \frac{f_\eps}{\mc M} -\beta\rp ^{2}\mc M \d v, 
		\end{align*} 
		where the inequality above follows from Jensen's and Cauchy-Schwarz inequalities. 
	\end{proof}
	
	\begin{lem} \label{lem:hydroeqs}
		Let $f_{\eps}$ be the solution to \eqref{eq:mainkinetic} and $\rho_{\eps}, j_{\eps}, E_{\eps}$ be the hydrodynamic quantities associated to $f_{\eps}$ as defined in \eqref{hydro_quantitites}. Then,
		\begin{align}
			\partial_t \rho_\eps &= - \frac{1}{\eps}\nabla_x \cdot j_\eps, \label{eq:rho_j}\\
			\partial_t j_\eps &= - \frac{1}{\eps} \nabla_x \cdot E_\eps - \frac{1}{\eps} \nabla_x \rho_\eps - \frac{1}{\eps^2} \rho_\eps^\alpha j_\eps.  \label{eq:j_E_rho}
		\end{align}
	\end{lem}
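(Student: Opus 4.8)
The plan is to derive the two evolution equations by taking velocity moments of the kinetic equation \eqref{eq:mainkinetic}, exploiting the fact that the collision operator on the right-hand side conserves mass but not momentum. First I would integrate \eqref{eq:mainkinetic} directly in $v$ over $\R^d$. The transport term contributes $\frac{1}{\eps}\nabla_x\cdot\int v f_\eps\,\d v = \frac{1}{\eps}\nabla_x\cdot j_\eps$, and the collision term on the right vanishes: indeed $\int_{\R^d}\rho_\eps^\alpha(\rho_\eps\mc M - f_\eps)\,\d v = \rho_\eps^\alpha(\rho_\eps\int\mc M\,\d v - \rho_\eps) = 0$ since $\mc M$ integrates to one and $\rho_\eps^\alpha$ is constant in $v$. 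This yields the continuity equation \eqref{eq:rho_j} immediately.

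For the flux equation \eqref{eq:j_E_rho}, I would multiply \eqref{eq:mainkinetic} by $v$ and integrate in $v$. The time-derivative term gives $\partial_t j_\eps$. The transport term gives $\frac{1}{\eps}\nabla_x\cdot\int (v\otimes v) f_\eps\,\d v$, which I would split using the definition of $E_\eps$ in \eqref{hydro_quantitites}: since $E_\eps = \int f_\eps(v\otimes v - I)\,\d v$, we have $\int (v\otimes v) f_\eps\,\d v = E_\eps + \rho_\eps I$, so the transport contribution becomes $\frac{1}{\eps}(\nabla_x\cdot E_\eps + \nabla_x\rho_\eps)$, accounting for the first two terms on the right of \eqref{eq:j_E_rho}. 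The collision term contributes $\frac{1}{\eps^2}\rho_\eps^\alpha\int v(\rho_\eps\mc M - f_\eps)\,\d v = \frac{1}{\eps^2}\rho_\eps^\alpha(\rho_\eps\int v\mc M\,\d v - j_\eps) = -\frac{1}{\eps^2}\rho_\eps^\alpha j_\eps$, since $\mc M$ has zero mean. Collecting these and moving everything to the right-hand side gives exactly \eqref{eq:j_E_rho}.

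The only genuine subtlety, and the step I would treat most carefully, is the justification of these moment computations: multiplying by $v$ and $v\otimes v$ and integrating requires that the relevant velocity moments of $f_\eps$ exist and that differentiation under the integral sign and integration against the transport term are legitimate. Here the Maxwellian bounds from Theorem \ref{thm:well-posedness} are the key tool: since $A^{-1}\mc M(v)\le f_\eps(t,x,v)\le A\mc M(v)$, all polynomial moments of $f_\eps$ in $v$ are finite and controlled uniformly, and the Gaussian tails of $\mc M$ guarantee that no boundary terms arise at infinity when integrating the transport term $v\cdot\nabla_x f_\eps$. The exchange of $\partial_t$ and $\nabla_x$ with the $v$-integral is then justified by dominated convergence using these bounds, so the formal moment identities hold rigorously in the weak (distributional in $x$) sense appropriate to the solution class $C([0,\infty);L^1)$.
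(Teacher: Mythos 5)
Your proposal is correct and follows exactly the paper's approach: the paper's entire proof is the one-line remark that the lemma ``follows by integrating Equation \eqref{eq:mainkinetic}'', and your moment computations (integrating against $1$ and $v$, splitting $\int (v\otimes v) f_\eps \,\d v = E_\eps + \rho_\eps I$, and using $\int \mc M \,\d v = 1$ and $\int v \mc M \,\d v = 0$) are precisely the details being elided. Your additional justification of the moment integrals and the exchange of derivatives via the Maxwellian bounds $A^{-1}\mc M \leq f_\eps \leq A\mc M$ from Theorem \ref{thm:well-posedness} is sound and goes slightly beyond what the paper records.
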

	\begin{proof}
		This follows by integrating Equation \eqref{eq:mainkinetic}. 
	\end{proof}
	We finish this section with two remarks justifying the diffusive limit formally by looking at an argument from the moments. 
	\begin{remark}
		From Equations \eqref{eq:rho_j} and \eqref{eq:j_E_rho},  we also have
		\begin{align*}
			\partial_t (\eps \nabla_x \cdot( \rho_\eps^{-\alpha}j_\eps)) &= -\alpha \nabla_x \cdot(\rho_\eps^{-\alpha-1} j_\eps (\nabla_x \cdot j_\eps)) - \nabla_x \cdot( \rho_\eps^{-\alpha} \nabla_x \cdot E_\eps)  - \nabla_x \cdot(\rho_\eps^{-\alpha} \nabla_x \rho_\eps) - \frac{1}{\eps} \nabla_x \cdot j_\eps.
		\end{align*}
		From which, it further follows that
		\begin{align*}
			\partial_t (\rho_\eps - \eps \nabla_x \cdot( \rho_\eps^{-\alpha} j_\eps)) &= \alpha \nabla_x \cdot(\rho_\eps^{-\alpha-1} j_\eps (\nabla_x \cdot j_\eps)) + \nabla_x \cdot( \rho_\eps^{-\alpha} \nabla_x \cdot E_\eps) + \nabla_x \cdot(\rho_\eps^{-\alpha} \nabla_x \rho_\eps).
		\end{align*}
		Thus, another way to understand the diffusive limit is to assume that as $\eps \rightarrow 0$ then $\nabla_x \cdot (\rho_\eps^{-\alpha}j_\eps)$ remains bounded and $\alpha \nabla_x \cdot(\rho_\eps^{-\alpha-1} j_\eps (\nabla_x \cdot j_\eps)) + \nabla_x \cdot( \rho_\eps^{-\alpha} \nabla_x \cdot E_\eps) \rightarrow 0$. 
	\end{remark}
	
	\begin{remark} \label{rem:R_eps}
		Similarly from equations \eqref{eq:rho_j}, \eqref{eq:j_E_rho}, we can see that we expect
		$ -\frac{1}{\eps}\rho_\eps^\alpha j_\eps - \nabla_x \rho_\eps \rightarrow 0, $ as \(t \rightarrow \infty\).
	\end{remark}

	\section{Well-posedness}  \label{sec:well-posedness}
	In this section, we study the Cauchy problem \eqref{eq:mainkinetic} and show that it is well posed. To show the well-posedness of \eqref{eq:mainkinetic}, we first study a simpler Cauchy problem
	\begin{align} \label{eq:auxkinetic}
		\begin{split}
			\partial_t f + v \cdot \nabla_x f &= \lambda(\rho_{f}) \bigl(\rho_{f} \mathcal{M} - f\bigr),  \\
			f(t=0)&=f_{\text{in}}, 
		\end{split}
	\end{align}
	for $t\in\R_{+},x\in\T^{d},v\in\R^{d}$ and $\lambda:\R_{+}\rightarrow\R_{+}$. We assume that $\lambda$ is a $C^{1}$ function satisfying the following bounds,
	\begin{align} \label{hyp:lambda}
		\inf_{z\in I}\lambda(z), \; \sup_{z\in I}\bigl|\lambda'(z)\bigr|, \; \sup_{z\in I}\bigl|(z\lambda(z))'\bigr|<+\infty, \quad \text{where} \quad I:=[A^{-1},A],
	\end{align}for some $A>1$.

	Next, we prove the well-posedness of \eqref{eq:auxkinetic} by means of a fixed point argument.
	\begin{prp} \label{prp:aux_well-posedness}
		Let $A>1$, $f_{\mathrm{in}} \in L^{1}(\T^{d}\times\R^{d})$ satisfy $A^{-1}\mc M \leq f_{\mathrm{in}}\leq A \mc M $, and assume $\lambda\in C^{1}(\R_{+})$ satisfies \eqref{hyp:lambda}. Then, the problem \eqref{eq:auxkinetic} has a unique weak solution $f \in C([0,+\infty);L^{1}(\T^{d}\times\R^{d}))$, and $f$ satisfies $A^{-1}\mc M \leq f \leq A \mc M $.
	\end{prp}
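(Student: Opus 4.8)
The plan is to freeze the nonlinearity in the coefficient, solve the resulting linear problem explicitly along characteristics, and close a Banach fixed point argument on a short time interval whose length $\tau$ depends only on $A$ and the constants in \eqref{hyp:lambda}. I would work in the complete metric space
\[
\mc K_\tau := \set{\bar f\in C([0,\tau];L^{1}(\T^{d}\times\R^{d})) : A^{-1}\mc M\le\bar f\le A\mc M},
\]
and, given $\bar f\in\mc K_\tau$ with density $\bar\rho:=\rho_{\bar f}$ (so $A^{-1}\le\bar\rho\le A$ since $\int\mc M\d v=1$), define $T(\bar f):=f$ to be the solution of the \emph{linear} Cauchy problem
\[
\partial_t f + v\cdot\nabla_x f + \lambda(\bar\rho)\,f = \lambda(\bar\rho)\,\bar\rho\,\mc M, \qquad f(0)=f_{\mathrm{in}}.
\]
A fixed point of $T$ is exactly a solution of \eqref{eq:auxkinetic}. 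The structural point I would exploit is that the damping rate $\lambda(\bar\rho)$ coincides with the prefactor of the source; this is what makes the bounds propagate.

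Since the transport is free, the characteristics are the straight lines $s\mapsto x-(t-s)v$, which do not depend on the data, and Duhamel's formula gives
\[
f(t,x,v)=f_{\mathrm{in}}(x-tv,v)\,e^{-\int_0^t b} + \int_0^t b(s)\,\bar\rho(s)\,\mc M(v)\,e^{-\int_s^t b}\,\d s,
\]
where $b(s):=\lambda(\bar\rho(s,x-(t-s)v))$ and $\bar\rho(s)$ is likewise evaluated along the characteristic. Using the identity $\int_0^t b(s)e^{-\int_s^t b}\d s = 1-e^{-\int_0^t b}\le 1$ together with $A^{-1}\mc M\le f_{\mathrm{in}}\le A\mc M$ and $A^{-1}\le\bar\rho\le A$ shows at once that $A^{-1}\mc M\le T(\bar f)\le A\mc M$; this is the announced maximum principle, and it holds for any frozen density in $[A^{-1},A]$. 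Continuity in time with values in $L^1$ is read off the same formula, so $T$ maps $\mc K_\tau$ into itself.

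For the contraction I would take $\bar f_1,\bar f_2\in\mc K_\tau$ with densities $\rho_1,\rho_2$ and set $w:=T(\bar f_1)-T(\bar f_2)$. Subtracting the two linear equations, $w$ solves a free-transport equation with zero initial data and source $[\lambda(\rho_2)-\lambda(\rho_1)]f_2+[\lambda(\rho_1)\rho_1-\lambda(\rho_2)\rho_2]\mc M$. The bounds \eqref{hyp:lambda} give $|\lambda(\rho_2)-\lambda(\rho_1)|\le L|\rho_1-\rho_2|$ and $|\lambda(\rho_1)\rho_1-\lambda(\rho_2)\rho_2|\le L'|\rho_1-\rho_2|$ with $L:=\sup_I|\lambda'|$ and $L':=\sup_I|(z\lambda(z))'|$. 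Bounding $f_2\le A\mc M$ and $e^{-\int_s^t b}\le1$ in the Duhamel formula for $w$, integrating in $v$ (via $\int\mc M\d v=1$) and in $x$ (via translation invariance of Lebesgue measure on $\T^d$ along the characteristic shear), I obtain
\[
\|w(t)\|_{L^1_{x,v}}\le (LA+L')\int_0^t \|\rho_1(s)-\rho_2(s)\|_{L^1_x}\,\d s \le (LA+L')\,\tau\,\|\bar f_1-\bar f_2\|_{C([0,\tau];L^1)},
\]
using $\|\rho_1-\rho_2\|_{L^1_x}\le\|\bar f_1-\bar f_2\|_{L^1_{x,v}}$. Choosing $\tau$ so that $(LA+L')\tau<1$ makes $T$ a contraction, and Banach's theorem yields a unique fixed point on $[0,\tau]$.

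Finally, because $\tau$ depends only on $A,L,L'$ and the endpoint $f(\tau)$ again satisfies $A^{-1}\mc M\le f(\tau)\le A\mc M$, I would restart from $f(\tau)$ and iterate, extending the solution to $[0,\infty)$ by concatenation; uniqueness on each block glues to global uniqueness (a direct Gronwall estimate on the difference of two solutions of \eqref{eq:auxkinetic} gives the same conclusion). The one point requiring care is the rigorous meaning of the characteristic representation at merely $L^1$ regularity, since the exponential weights involve $\bar\rho$ restricted to lines. I expect this to be benign precisely because the transport is free: the characteristics are explicit straight lines independent of $\bar\rho$, so for each fixed $v$ the map $x\mapsto x-(t-s)v$ is a measure-preserving translation of $\T^d$. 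One therefore justifies the formula and the weak formulation either by first treating continuous $\bar f$ (where everything is classical) and passing to the limit using the uniform $L^\infty$ bounds, or equivalently by solving the mild equation directly in $L^1$ through the free-transport group and the bounded multiplication operator $\bar f\mapsto\lambda(\rho_{\bar f})$. I anticipate no obstruction beyond this bookkeeping.
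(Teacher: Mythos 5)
Your proposal is correct and takes essentially the same approach as the paper's proof: freeze the density, solve the resulting linear transport problem along the (free, explicit) characteristics, propagate the bounds $A^{-1}\mathcal{M}\leq f\leq A\mathcal{M}$ by a maximum principle, close a contraction in $C([0,T];L^{1}(\T^{d}\times\R^{d}))$ on a short interval whose length depends only on $A$ and the constants in \eqref{hyp:lambda}, and iterate to all times. The only cosmetic differences are that the paper verifies invariance by showing $r=A\mathcal{M}-g\geq0$ rather than via your explicit Duhamel identity, and retains the damping $\lambda_{0}=\inf_{I}\lambda$ in the Gr\"onwall step (giving the factor $\frac{C}{\lambda_{0}}(1-e^{-\lambda_{0}T})$) where you simply bound the exponential by one — both yield the same smallness for small $T$, with the same Lipschitz constants $\sup_{I}|\lambda'|$ and $\sup_{I}|(z\lambda(z))'|$.
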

	\begin{proof}
		Let $T>0$ and define the space $\mathcal{V}:=\{\phi\in C([0,T];L^{1}(\T^{d}\times\R^{d})):A^{-1}\mc M \leq \phi \leq A \mc M \}$ with norm $\|\phi\|_{\mc V}:=\sup_{0\leq t\leq T}\|\phi(t,\cdot,\cdot)\|_{L^{1}}$. For $f\in\mathcal{V}$, define $\Gamma(f)=g$ as the solution of the linear transport problem
		\begin{align*}
			\partial_{t}g+v\cdot\nabla_{x}g &= \lambda(\rho_{f})(\rho_{f}\mathcal{M}-g), \\
			g(t=0)&=f_{\text{in}},
		\end{align*}
		obtained via the method of characteristics. We first show that $\Gamma$ maps $\mathcal{V}$ into $\mathcal{V}$. Set $r=A\mathcal{M}-g$; then $r$ solves
		\begin{align*}
			\partial_{t}r+v\cdot\nabla_{x}r +\lambda(\rho_{f})r &= \lambda(\rho_{f})(A-\rho_{f})\mathcal{M}, \\
			r(t=0)&=A\mathcal{M}-f_{\text{in}}.
		\end{align*}
		By the method of characteristics we may explicitly solve for $r$ and use $r(t=0)\geq0$, $\rho_{f}\leq A$ to show that $r\geq0$, and so $g\leq A\mathcal{M}$. Similarly, taking $r=g-A^{-1}\mathcal{M}$, we find that $A^{-1}\mathcal{M}\leq g$ and so $g\in \mathcal{V}$. 
		
		Let us now show that $\Gamma$ is a contraction on $\mathcal{V}$. To this end, we let $f_{1},f_{2}\in\mathcal{V}$, set $g_{1}=\Gamma(f_{1}), g_{2}=\Gamma(f_{2})$ and $w=g_{1}-g_{2}$. Then $w$ satisfies 
		\begin{align*}
			\partial_{t}w+v\cdot\nabla_{x}w&=\lambda(\rho_{f_{1}})\rho_{f_{1}}\mathcal{M}-\lambda(\rho_{f_{1}})g_{1}-\lambda(\rho_{f_{2}})\rho_{f_{2}}\mathcal{M}+\lambda(\rho_{f_{2}})g_{2} \\
			&=\lp \lambda(\rho_{f_{1}})\rho_{f_{1}}-\lambda(\rho_{f_{2}})\rho_{f_{2}}\rp\mathcal{M} -g_{2}\bigl(\lambda(\rho_{f_{1}})-\lambda(\rho_{f_{2}})\bigr)-\lambda(\rho_{f_{1}})w.
		\end{align*}
		Collecting all terms involving $w$ on the left-hand-side, we find
		\begin{equation*}
			\partial_{t}w+v\cdot\nabla_{x}w +\lambda(\rho_{f_{1}})w = \bigl(\rho_{f_{1}}\lambda(\rho_{f_{1}})-\rho_{f_{2}}\lambda(\rho_{f_{2}})\bigr)\mc M -g_{2}\bigl(\lambda(\rho_{f_{1}})-\lambda(\rho_{f_{2}})\bigr):=S.
		\end{equation*}
		Multiplying by $\text{sgn}(w)$ and integrating in $x$ and $v$ we find that, for $\lambda_{0}:=\inf_{z\in[A^{-1},A]}|\lambda(z)|$,
		\begin{equation*}
			\frac{\d}{\d t}\|w(t,\cdot,\cdot)\|_{L^{1}} + \lambda_{0}\|w(t,\cdot,\cdot)\|_{L^{1}}\leq\|S(t,\cdot,\cdot)\|_{L^{1}}.
		\end{equation*}
		Using the fact that $g_{2}\in\mathcal{V}$ and so is bounded from above by $A\mathcal{M}$, we find
		\begin{align*} 
			\begin{split}
				\| S(t,\cdot,\cdot) \|_{L^{1}} &\leq\| \mathcal{M} \|_{\infty}\bigl(\sup_{z\in I}|(z\lambda(z))'|+A\sup_{z\in I}|\lambda'(z)|\bigr)\|f_{1}(t,\cdot,\cdot)-f_{2}(t,\cdot,\cdot)\|_{L^{1}} \\ &:=C\|f_{1}(t,\cdot,\cdot)-f_{2}(t,\cdot,\cdot)\|_{L^{1}}.
			\end{split}
		\end{align*}
		Thus, using $w(t=0)=0$, by Gr\"onwall's lemma we find that 
		\begin{align*}
			\|w(t,\cdot,\cdot)\|_{L^{1}}\leq C\int_{0}^{t} e^{-\lambda_{0}(t-s)}\|f_{1}(s,\cdot,\cdot)-f_{2}(s,\cdot,\cdot)\|_{L^{1}} \d s \leq \frac{C}{\lambda_{0}}\bigl(1-e^{-\lambda_{0}t}\bigr)\|f_{1}-f_{2}\|_{\mathcal{V}},
		\end{align*}
		and so we conclude that
		\begin{equation*}
			\|\Gamma(f_{1})-\Gamma(f_{2})\|_{\mathcal{V}}\leq\frac{C}{\lambda_{0}}\bigl(1-e^{-\lambda_{0}T}\bigr)\|f_{1}-f_{2}\|_{\mathcal{V}}.
		\end{equation*}
		Therefore, for $T$ sufficiently small, $\Gamma$ is indeed a contraction on $\mathcal{V}$ and Banach's fixed point theorem gives the existence of a unique fixed point $g$ of $\Gamma$ in $\mathcal{V}$. It can be easily seen that $g$ is a solution of \eqref{eq:auxkinetic} on $[0,T]$ and so has constant $L^{1}$ norm in time. 
		
		As $T$ depends only on $\lambda$ and $A$, and the fixed point $g$ satisfies the same bounds as $f_{\mathrm{in}}$, we may iterate this process to extend $g$ to all finite times. Since $g$ has constant $L^{1}$ norm and is uniformly bounded from above and below for all times we conclude that $g\in C([0,\infty);L^{1}(\T^{d}\times\R^{d}))$ with $A^{-1}\mc M \leq g \leq A \mc M $ as claimed.
	\end{proof}
	With this, the well-posedness of \eqref{eq:mainkinetic} immediately follows.
	
	\begin{proof}[Proof of Theorem \ref{thm:well-posedness}]
		Set $\lambda(z)=z^{\alpha}$. Clearly, such $\lambda$ satisfies \eqref{hyp:lambda} for all $\alpha\in\R$ and so we may use Proposition \ref{prp:aux_well-posedness} to obtain the existence of a unique solution $f\in C([0,+\infty);L^{1}(\T^{d}\times\R^{d}))$ of the unscaled equation \eqref{eq:BGK}. Defining $f_{\eps}(t,x,v):=f(\eps^{2}t,\eps x,v)$ as the parabolic rescaling of $f$ it follows that $f_{\eps}$ is the unique solution of \eqref{eq:mainkinetic}.
	\end{proof}

	\section{Long-time behaviour} \label{sec:long-time-behaviour}
	
	In this section, we show that the solution to Equation \eqref{eq:mainkinetic} converges to equilibrium as $t \rightarrow \infty$, in the space $L_{x,v}^2(\mathcal{M}^{-1}):= \left \{ g \, : \, \int_{\T^d} \int_{\R^d} \frac{|g (t, x,v)|^2}{\mc M (v)} \d v \d x  < \infty\right \}$. 
	We achieve this by adapting the hypocoercivity techniques developed in \cite{DMS15}. Particularly, we have to pay attention to ensure that the nonlinear terms do not cause us problems.

	\subsection{ $L^2$-hypocoercivity \`a la Dolbeault-Mouhot-Schmeiser}
	For our theorem in this section, we need some straightforward technical results. We collect them in the lemma below. 
	
	We consider $u(x), \, x\in \T^d$ and define its Fourier transform $\hat u (k) := \int_{\T^d} u(x) e^{-2\pi i k x} \d x$. We also define the operator  $(I-\Delta_x)^{-1}$ for $x \in \T^d$ by $\mathscr{F}((I-\Delta_x)^{-1} u(x)) := \frac{1}{1+4\pi^2 |k|^2} \hat{u}(k)$. 
	
	\begin{lem} \label{lem:Fourier_est}
		The following estimates hold for  $(I-\Delta_x)^{-1}$:
		\begin{align}
			\| (I-\Delta_x)^{-1}\partial^2_{x_i, x_j} u\|_{L^2_x}^2 &\leq  \|u\|_{L^2_x}^2, \label{eq:grad^2}\\
			\| (I-\Delta_x)^{-1}\partial_{x_i} u\|_{L^2_x}^2 &\leq  \|u - \bar{u}\|_{L^2_x}^2 \leq  \|u\|_{L^{2}_{x}}^2, \label{eq:grad}\\
			\| (I-\Delta_x)^{-1/2} \nabla_x u\|_{L^2_x}^2 &  \geq C\|u - \bar{u}\|_{L^2_x}^2, \label{eq:grad_half}
		\end{align} where $\bar u := \int_{\T^{d}} u(x) \d x$ and $C= 4\pi^2/(1+ 4 \pi^2) $.
	\end{lem}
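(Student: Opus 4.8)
The plan is to prove all three estimates by passing to the Fourier side and reducing each to a pointwise bound on a Fourier multiplier. Since $u$ is a function on the unit torus $\T^d$, its Fourier transform is supported on the integer lattice, so I write $\hat u = \hat u(k)$ with $k\in\Z^d$, and Plancherel's theorem gives $\|u\|_{L^2_x}^2 = \sum_{k\in\Z^d}|\hat u(k)|^2$. Differentiation becomes multiplication: $\widehat{\partial_{x_i} u}(k) = 2\pi i k_i\,\hat u(k)$ and $\widehat{\partial^2_{x_i,x_j} u}(k) = -4\pi^2 k_i k_j\,\hat u(k)$, while by definition $(I-\Delta_x)^{-1}$ and $(I-\Delta_x)^{-1/2}$ act as multiplication by $(1+4\pi^2|k|^2)^{-1}$ and $(1+4\pi^2|k|^2)^{-1/2}$ respectively. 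It is also worth recording at the outset that the mean $\bar u = \int_{\T^d} u\,\d x = \hat u(0)$ is exactly the zero Fourier mode, so that $\|u-\bar u\|_{L^2_x}^2 = \sum_{k\neq 0}|\hat u(k)|^2$.

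With these identities, estimate \eqref{eq:grad^2} reduces to showing that the multiplier $16\pi^4 k_i^2 k_j^2 (1+4\pi^2|k|^2)^{-2}$ is bounded by $1$ for every $k\in\Z^d$. This follows from the elementary bounds $4\pi^2 k_i^2 \leq 4\pi^2|k|^2 \leq 1+4\pi^2|k|^2$ and the analogous bound for $k_j$, whose product gives $16\pi^4 k_i^2 k_j^2 \leq (1+4\pi^2|k|^2)^2$. For the first inequality in \eqref{eq:grad} I would bound the multiplier $4\pi^2 k_i^2 (1+4\pi^2|k|^2)^{-2}$ by $1$ in the same way; crucially this multiplier vanishes at $k=0$, so summing over $k\neq 0$ produces $\|u-\bar u\|^2_{L^2_x}$ rather than $\|u\|^2_{L^2_x}$, and the second inequality in \eqref{eq:grad} is the trivial observation that dropping the $k=0$ term only decreases the Plancherel sum.

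The one estimate requiring a genuine (if still elementary) idea is the lower bound \eqref{eq:grad_half}, and this is the step I would treat most carefully. After passing to Fourier it amounts to showing $\sum_{k}\frac{4\pi^2|k|^2}{1+4\pi^2|k|^2}|\hat u(k)|^2 \geq C\sum_{k\neq0}|\hat u(k)|^2$. The multiplier again vanishes at $k=0$, matching the excluded mode on the right-hand side. For $k\neq 0$ the key point is that $k$ lives on the integer lattice, so $|k|^2\geq 1$; since $t\mapsto \frac{4\pi^2 t}{1+4\pi^2 t}$ is increasing on $t>0$, the multiplier is bounded below by its value at $|k|^2=1$, namely $\frac{4\pi^2}{1+4\pi^2}=C$. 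This is a spectral-gap (Poincar\'e) phenomenon: the gap between the zero mode and the first nonzero frequency of the torus is exactly what produces the constant $C$, and it is the only place where the discreteness of the spectrum is used in an essential way. All the remaining verifications are routine term-by-term comparisons under the Plancherel sum.
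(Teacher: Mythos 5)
Your proof is correct and takes essentially the same route as the paper: both pass to Fourier series on $\T^d$, use Plancherel together with the identification $\bar u = \hat u(0)$, and verify each estimate by a termwise bound on the corresponding multiplier, with the lattice spectral gap $|k|^2\geq 1$ for $k\neq 0$ producing the constant $C=4\pi^2/(1+4\pi^2)$ in \eqref{eq:grad_half}. The only difference is cosmetic: you spell out the elementary multiplier inequalities that the paper dispatches as direct computation.
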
 
	\begin{proof}
		Notice that the operator  $(I-\Delta_x)^{-1}$ commutes with taking  derivatives of $u$ in $x$, i.e., $\nabla_x ((I-\Delta_x)^{-1} u )= (I-\Delta_x)^{-1} \nabla_x u$. It is also positive and self adjoint and so has a square root which is given by  $\mathscr{F}((I-\Delta_x)^{-1/2} u) = \frac{1}{(1+4\pi^2 |k|^2)^{1/2}} \hat{u}$. 
	By Plancherel theorem, we have $\|u\|^2_{L^2_x} = \sum_{k \in \Z^d} |\hat u (k)|$. Moreover, for the zero-th Fourier mode, i.e., $k=0$, we have that $\hat u (0) = \int_{\T^{d}} u(x) \d x  = \bar u$.
	This means
		\begin{align*}
			\|u - \bar u \|^2_{L^2_x} = \sum_{k\in \Z^d, k\neq 0} |\hat u (k)|^2 \leq \sum_{k\in \Z^d} | \hat u (k)|^2 = \|u\|^2_{L^2_x}.
		\end{align*}
	The rest follows by direct computation. Then, we have
		\begin{align*}
			\| (I-\Delta_x)^{-1}\partial^2_{x_i, x_j} u\|_{L^2_x}^2 &= \sum_{k \in \mathbb{Z}^d} \left| \frac{ -4 \pi^2 k_i k_j}{1+4\pi^2 |k|^2} \hat{u}(k)\right|^2   \leq \sum_{k \in \mathbb{Z}^d} |\hat{u}(k)|^2 = \|u\|_{L^2_x}^2, \\
			\| (I-\Delta_x)^{-1}\partial_{x_i} u\|_{L^2_x}^2 &= \sum_{k \in \mathbb{Z}^d} \left| \frac{2 \pi i k_i}{1+4\pi^2 |k|^2} \hat{u}(k)\right|^2   \leq \sum_{k \in \mathbb{Z}^d, k \neq 0} |\hat{u}(k)|^2 = \|u - \bar{u}\|_{L^2_x}^2 \leq  \|u\|_{L^2_x}^2,\\
			\| (I-\Delta_x)^{-1/2} \nabla_x u\|_{L^2_x}^2 &= \sum_{k \in \mathbb{Z}^{d}} \frac{4 \pi^2|k|^2}{1+ 4\pi^2 |k|^2} |\hat{u}(k)|^2  \geq C \sum_{k \in \mathbb{Z}^{d}, k \neq 0} |\hat{u}(k)|^2  = C\|u - \bar{u}\|_{L^2_x}^2, 
		\end{align*} where $C= 4\pi^2/(1+ 4 \pi^2)$.
	\end{proof}
	
	\begin{proof}[Proof of Theorem \ref{thm:longtime}]
		To prove the theorem, we adapt the proof in $L^2$ hypocoercivity
		in \cite{DMS15}. We are able to apply it fairly directly in the nonlinear setting by using the upper and lower bounds on $\rho$. We present it in a slightly different form and make some simplifications.
		
		The strategy of showing hypocoercivity in \cite{DMS15} involves creating a new norm which is equivalent to the weighted $L^2$ norm. This is done by adding a small bounded perturbation to the $L^2$ norm which depends only on the hydrodynamic quantities. On this perturbed norm we are able to construct a Gr\"onwall argument.
		
		First we look at the dissipation of the $L^2$ norm. 
		Occasionally we use the shorter notations $f_\eps (v) = f_\eps (t,x,v)$ or $f_\eps (t) = f_\eps (t,x,v)$. We first note that,
		\begin{equation*}
			\frac{\d}{\d t}\ixv \frac{(f_\eps-\mc M)^2 }{\mc M } \dv \dx=2\ixv \frac{f_\eps-\mc M}{\mc M }\partial_{t}f_{\eps} \dv \dx = 2\ixv \frac{f_\eps}{\mc M }\partial_{t}f_{\eps} \dv \dx,
		\end{equation*}
		where the second term vanishes due to the conservation of mass. We then have, 
		\begin{align*}
			\frac{\d}{\d t}\ixv &\frac{(f_\eps-\mc M)^2 }{\mc M } \dv \dx 
			\\ &= - \frac{2}{\eps} \ixv \frac{f_\eps}{
				\mc M}  (v \cdot \nabla_x f_\eps) \dv \dx + \frac{2}{\eps^2} \ixv \frac{f_\eps}{
				\mc M }  \rho_{\eps}^{\alpha} (\rho_{\eps} \mc M - f_\eps ) \dv \dx \\
			&= -\frac{1}{\eps}\ixv  v\cdot  \frac{\nabla_x(f_\eps^2) }{\mc M} \dv \dx + \frac{2}{\eps^2} \int_{\T^d} \rho_\eps^{\alpha} \int_{\R^d} \int_{\R^d} \lp  f_\eps(u)f_\eps(v) -f_\eps^2(v) \frac{\mc M(u)}{\mc M(v)} \rp \dv \d u \dx \\
			&= -\frac{1}{\eps^2} \int_{\T^d} \rho_\eps^{\alpha}  \int_{\R^d} \int_{\R^d} \lp \lp \frac{f_\eps(u)}{\mc M(u)} - \frac{f_\eps(v)}{\mc M(v)} \rp^2 + \frac{f_\eps^2(u)}{\mc M^2 (u)} - \frac{f_\eps^{2}(v)}{\mc M^2 (v)} \rp \mc M (u) \mc M (v) \d u \dv \dx \\
			& = -\frac{1}{\eps^2} \int_{\T^d} \rho_\eps^{\alpha}  \int_{\R^d} \int_{\R^d}
			\lp\frac{f_\eps(u)}{\mc M(u)} - \frac{f_\eps(v)}{\mc M(v)} \rp^2
			\mc M (u) \mc M (v) \d u \dv \dx.
		\end{align*} 
		
		If we write $g_\eps := f_\eps - \rho_\eps \mc M $, then $\int g_\eps \d v = 0$. So, we have
		\begin{align}
			\frac{\d}{\d t} \ixv\frac{(f_\eps-\mc M)^2}{\mc M} \dv \dx   &= - \frac{1}{\eps^2}\int_{\T^d} \rho_\eps^{\alpha} \int_{\R^d} \int_{\R^d} \lp g_\eps^2(u) \frac{\mc M (v)}{\mc M (u)} + g_\eps^2(v) \frac{\mc M (u)}{\mc M (v)}  - 2g_\eps(v) g_\eps(u) 
			\rp \d u \dv \dx \nonumber \\
			& = -\frac{2}{\eps^2} \int_{\T^d} \rho_\eps^\alpha \int_{\R^d} \frac{g_\eps^2}{\mc M } \dv \dx\nonumber
			\\&= -\frac{2}{\eps^2} \int_{\mathbb{T}^d} \rho_\eps^\alpha  \nonumber \int_{\R^d}\frac{((I-\Pi)f_\eps)^2}{\mc M} \dv \dx \nonumber \\
			& \leq - \frac{2}{\eps^2} \frac{1}{A^{\alpha}} \|(I-\Pi)f_\eps(t)\|^2_{L_{x,v}^2(\mc M^{-1})},  \label{ineq:f_2}
		\end{align} where $(I-\Pi) f := f - \rho \mc M$.
		
		In order to construct the perturbation term, we use the hydrodynamic quantities defined in \eqref{hydro_quantitites}. We recall the equations on the hydrodynamic quantities (given earlier by Lemma \ref{lem:hydroeqs}) below, so that we can follow the subsequent computations with ease.
		\begin{align*}
			\partial_t \rho_\eps &= - \frac{1}{\eps}\nabla_x \cdot j_\eps,\\
			\partial_t j_\eps &= - \frac{1}{\eps} \nabla_x \cdot E_\eps - \frac{1}{\eps} \nabla_x \rho_\eps - \frac{1}{\eps^2} \rho_\eps^\alpha j_\eps. 
		\end{align*}
		We then take a fairly standard perturbation term, now classical in hypocoercivity theory:
		\begin{align*}
			\frac{\d}{\d t} \ix j_\eps \cdot &(I-\Delta_x)^{-1} \nabla_x \rho_\eps \dx \\& = -\frac{1}{\eps}\int_{\T^d} j_\eps \cdot (I-\Delta_x)^{-1}\nabla_x (\nabla_x \cdot j_\eps) \dx - \frac{1}{\eps} \int_{\T^d} \nabla_x \cdot E_\eps (I-\Delta_x)^{-1} \nabla_x \rho_\eps \dx \\
			&\qquad \quad - \frac{1}{\eps} \int_{\T^d} \nabla_x \rho_\eps \cdot (I-\Delta_x)^{-1} \nabla_x \rho_\eps \dx  - \frac{1}{\eps^2} \int_{\T^d}\rho_\eps^\alpha j_\eps \cdot (I-\Delta_x)^{-1} \nabla_x \rho_\eps \dx  \\
			& \leq \frac{1}{\eps} \lp  \|j_\eps\|_{L^2_x} \|(I-\Delta_x)^{-1} \nabla_x \lp \nabla_x\cdot j_\eps \rp \|_{L^2_x} + \|E_\eps\|_{L^2_x} \|(I-\Delta_x)^{-1} \nabla^2_x \rho_\eps \|_{L^2_x} \rp  \\
			& \qquad \quad + \frac{1}{\eps^2} \|\rho_\eps^\alpha\|_{L^\infty} \|j_\eps\|_{L^2_x} \|(I-\Delta_x)^{-1} \nabla_x \rho_\eps\|_{L^2_x}  - \frac{1}{\eps} \|(I-\Delta_x)^{-1/2} \nabla_x \rho_\eps\|_{L^2_x}^2 
		\end{align*} Now using Lemma \ref{lem:Fourier_est} and Young's inequality we estimate the above quantity further as,
		\begin{align}
			\frac{\d}{\d t} \ix j_\eps \cdot (I-\Delta_x)^{-1} &\nabla_x \rho_\eps \dx \nonumber
			\\ &\leq \frac{1}{\eps} \lp \|j_\eps\|_{L^2_x}^2 + \|E_\eps\|_{L^2_x}^2 + \frac{1}{4} \|\rho_\eps-\bar{\rho_\eps}\|_{L^2_x}^2 \rp \nonumber
			\\ & \qquad  +\frac{1}{\eps^2} \|\rho_\eps^\alpha\|_{L^\infty} \lp \frac{\|\rho_\eps^\alpha\|_{L^\infty}}{\eps} \|j_\eps\|_{L^2_x}^2 + \frac{\eps}{4 \|\rho_\eps^\alpha\|_{L^\infty}}\|\rho_\eps- \bar{\rho_\eps}\|_{L^2_x}^2 \rp - \frac{C}{\eps}\|\rho_\eps-\bar{\rho_\eps}\|_{L^2_x}^2 \nonumber \\
			& = - \frac{1}{\eps}\bigl(C-\frac{1}{2}\bigl) \|\rho_\eps - \bar{\rho_\eps}\|_{L^2_x}^2 + \frac{1}{\eps} \|E_\eps\|_{L^2_x}^2 +\frac{1}{\eps}\|j_{\eps}\|_{L^{2}_{x}}^{2}+  \frac{1}{\eps^3}\|\rho_\eps^\alpha\|_{L^\infty}^2 \|j_\eps\|_{L^2_x}^2 \nonumber \\
			& \leq - \frac{1}{4\eps} \|\rho_\eps - \bar{\rho_\eps}\|_{L^2_x}^2 +  \frac{1}{\eps^3}(2 + \|\rho_\eps^\alpha\|_{L^\infty}^2) \|(I-\Pi) f_\eps\|^2_{L^2_{x,v}(\mc M^{-1})}, \label{ineq:j_grad_rho}
		\end{align} for $\eps\leq1$ where the last inequality follows from applying Lemma \ref{lem:flux,energy-bounds} with $\beta=\rho_{\eps}$ and then integrating in space. To ease notation, we also bounded the constant appearing in Lemma \ref{lem:Fourier_est} from below, $C=4\pi^{2}/(1+4\pi^{2})>3/4$.
	
		Then, using \eqref{ineq:f_2} and \eqref{ineq:j_grad_rho}, we obtain
		\begin{multline} \label{eq:gronwall}
			\frac{\d}{\d t} \lp \|f_\eps-\mc M\|^2_{L_{x,v}^2(\mc M^{-1})} + \frac{\eps}{A^{\alpha}(2 + A^{2\alpha})}\int_{\T^d} j_\eps \cdot (I-\Delta_x)^{-1} \nabla_x \rho_\eps \dx\rp \\
			\leq - \frac{1}{\eps^2} \frac{1}{A^{\alpha}} \|(I-\Pi) f_\eps\|^2_{L_{x,v}^2(\mc M ^{-1})} - \frac{1}{4 A^{\alpha} (2+A^{2\alpha})} \|\rho_\eps-\bar{\rho_\eps}\|_{L^2_x}^2.
		\end{multline}
		For $\eps$ sufficiently small, we have
		\begin{align*} \frac{\eps}{A^{\alpha}(2+ A^{2\alpha})}\int_{\T^d} j_\eps \cdot (I-\Delta_x)^{-1} \nabla_x \rho_\eps \dx  &\leq \frac{\eps}{A^{\alpha}(2+ A^{2\alpha})} \|j_\eps\|_{L^2_x} \|(I-\Delta_x)^{-1} \nabla_x \rho_\eps\|_{L^2_x}\\
			& \leq \frac{\eps}{A^{\alpha}(2+ A^{2\alpha})} \|f_\eps-\mc M\|^2_{L_{x,v}^2(\mc M^{-1})} \leq \frac{1}{2} \|f_\eps-\mc M\|^2_{L_{x,v}^2(\mc M^{-1})},  \end{align*} with the second equality following from Lemma \ref{lem:flux,energy-bounds} with $\beta=1$. Consequently for $\eps$ sufficiently small, we obtain
		\begin{align*}
			\frac{\d}{\d t } \Big ( &\|f_\eps-\mc M\|^2_{L^2(\mc M^{-1})}  + \frac{\eps}{A^{\alpha}(2+ A^{2\alpha})}\int_{\T^d} j_\eps \cdot (I-\Delta_x)^{-1} \nabla_x \rho_\eps \dx \Big )   \\
			&\leq - \frac{1}{4A^{\alpha} (2+ A^{2\alpha})} \bigl(\|(I-\Pi) f_\eps\|^2_{L_{x,v}^2(\mc M ^{-1})}+\|\rho_\eps-\bar{\rho_\eps}\|_{L^2_x}^2\bigr)\\
			&= - \frac{1}{4A^{\alpha} (2+ A^{2\alpha})} \|f_\eps-\mc M\|^2_{L_{x,v}^2(\mc M^{-1})}\\
			& \leq -\frac{1}{8A^{\alpha} (2+ A^{2\alpha})} \lp \|f_\eps-\mc M\|^2_{L_{x,v}^2(\mc M^{-1})} + \frac{\eps}{A^{\alpha}(2+ A^{2\alpha})}\int_{\T^d} j_\eps \cdot (I-\Delta_x)^{-1} \nabla_x \rho_\eps \dx \rp.
		\end{align*}
		So by Gr\"onwall's lemma, we have
		\begin{multline*}  \|f_\eps(t)-\mc M\|^2_{L_{x,v}^2(\mc M^{-1})} + \frac{\eps}{A^{\alpha}(2+ A^{2\alpha})}\int_{\T^d} j_\eps(t) \cdot (I-\Delta_x)^{-1} \nabla_x \rho_\eps(t) \dx \\
			\leq e^{-\gamma t} \lp \|f_\eps(0)-\mc M\|^2_{L_{x,v}^2(\mc M^{-1})} + \frac{\eps}{A^{\alpha}(2+ A^{2\alpha})}\int_{\T^d} j_\eps(0) \cdot (I-\Delta_x)^{-1} \nabla_x \rho_\eps(0) \dx \rp, 
		\end{multline*} from which \eqref{thm:f_bound} follows. 
	\end{proof}

	\section{Diffusive asymptotics}
	\label{sec:diffusive_asymptotics}
	
	In this section, we quantify the rate of the diffusive asymptotics by combining a study of the finite-time asymptotics with the uniform in $\eps$ convergence to equilibrium of both the kinetic and the parabolic solutions over long times. The long time asymptotics follow from the exponential relaxation to equilibrium of both the kinetic and the parabolic solutions, while the finite-time asymptotics require a delicate study of the relative entropy between the solutions.
	
	Let us begin by quickly summarising the long time asymptotics.
	\subsection{Long-time asymptotics}
	\label{ssec:long_time_asymptotics}
	We first state well-posedness and regularity result on the solution to the nonlinear diffusion equation. 
	\begin{lem} \label{lem:rho_well-posedness}
		Let $A>1$ and $\rho_{\mathrm{in}}\in C^{k}(\T^{d})$ for $k\geq0$ with $A^{-1}\leq\rho_{\mathrm{in}}\leq A$. Then there exists a unique classical solution $\tilde \rho\in C^{\infty}((0,\infty)\times\T^{d})\cap C^{k}([0,\infty)\times\T^{d})$ of the problem \eqref{eq:mainparabolic} with initial data $\rho_{\mathrm{in}}$, and $\tilde \rho$ satisfies $A^{-1}\leq \tilde \rho \leq A$.
	\end{lem}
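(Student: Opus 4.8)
The plan is to remove the degeneracy and singularity of the nonlinear diffusion by a truncation that exploits the a priori bounds, thereby reducing the problem to classical \emph{uniformly} parabolic theory that applies uniformly in $\alpha\in\R$. Concretely, I would fix a smooth function $\Phi:\R_{+}\to\R_{+}$ with $\Phi(z)=z^{-\alpha}$ for $z\in[A^{-1},A]$, constant (hence with bounded derivatives of all orders) outside $[\tfrac{1}{2A},2A]$, and satisfying $0<c_{A}\leq\Phi\leq C_{A}<\infty$ globally, where $c_{A},C_{A}$ depend only on $\alpha,A$. Consider the modified Cauchy problem
\begin{equation*}
\partial_{t}\rho=\nabla_{x}\cdot\bigl(\Phi(\rho)\nabla_{x}\rho\bigr)=\Phi(\rho)\Delta_{x}\rho+\Phi'(\rho)|\nabla_{x}\rho|^{2},\qquad \rho(0,\cdot)=\rho_{\mathrm{in}},
\end{equation*}
which is now uniformly parabolic with smooth, bounded coefficients on all of $\T^{d}$.

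For this modified problem I would invoke standard quasilinear parabolic theory on the torus (periodicity removes all spatial boundary compatibility issues): for $C^{k}$, hence in particular H\"older, initial data the Ladyzhenskaya--Solonnikov--Ural'tseva framework, or equivalently a Banach fixed-point argument in parabolic H\"older spaces using Schauder estimates for the frozen-coefficient linear equation, yields a unique global classical solution $\tilde\rho\in C^{1,2}((0,\infty)\times\T^{d})$. The uniform ellipticity bounds $c_{A},C_{A}$ are exactly what make these estimates global in time.

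Next I would recover the pointwise bounds by the comparison principle. The constants $A$ and $A^{-1}$ are stationary solutions of the modified equation (the right-hand side vanishes identically for constants, for any choice of $\Phi$), and $A^{-1}\leq\rho_{\mathrm{in}}\leq A$; since the equation is uniformly parabolic, the comparison principle gives $A^{-1}\leq\tilde\rho\leq A$ for all $t\geq0$. On this range $\Phi(\tilde\rho)=\tilde\rho^{-\alpha}$, so $\tilde\rho$ is in fact a classical solution of the \emph{original} equation \eqref{eq:mainparabolic}. Uniqueness for \eqref{eq:mainparabolic} among solutions valued in $[A^{-1},A]$ then follows, because any such solution also solves the uniformly parabolic modified problem, for which the solution is unique.

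Finally, for regularity I would bootstrap: once $\tilde\rho$ is known to be bounded in $[A^{-1},A]$ and classical, the coefficients $\Phi(\tilde\rho)$ and $\Phi'(\tilde\rho)$ are smooth functions of a $C^{1,2}$ argument, so interior parabolic Schauder estimates upgrade $\tilde\rho$ to $C^{\infty}((0,\infty)\times\T^{d})$; the instantaneous smoothing for $t>0$ holds for merely continuous initial data. Matching the initial regularity to obtain $\tilde\rho\in C^{k}([0,\infty)\times\T^{d})$ is the most delicate point: it requires global (up to $t=0$) Schauder estimates in spaces adapted to $C^{k}$ data, which on the torus go through without compatibility conditions but must be tracked carefully order by order. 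I expect this up-to-the-initial-time regularity, rather than the existence or the maximum principle, to be the main technical obstacle; the interior smoothing and the $L^{\infty}$ bounds are routine once the truncation has rendered the equation nondegenerate.
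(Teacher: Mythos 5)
Your proposal is correct and follows essentially the same route as the paper, which disposes of this lemma in two sentences by observing that \eqref{eq:mainparabolic} is uniformly parabolic away from $0$ and $+\infty$, so that classical quasilinear theory and the strong maximum principle apply to data bounded between $A^{-1}$ and $A$ (citing \cite{Vazquez-PME}). Your truncation of $z^{-\alpha}$ via $\Phi$, followed by comparison with the constant stationary solutions $A^{\pm1}$ and Schauder bootstrapping, is precisely the standard way of making that remark rigorous, so there is nothing genuinely different to compare.
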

	Notice that Equation \eqref{eq:mainparabolic} is uniformly parabolic away from zero and $+\infty$, so the classical theory applies for initial data uniformly bounded from above and below. We then have well-posedness and a strong maximum principle and Lemma \ref{lem:rho_well-posedness} follows. We refer the reader to \cite{Vazquez-PME} for a review of the theory of the nonlinear diffusion equation.
	\medskip
	
	We are now ready to state the long-time asymptotics. We obtain this result combining the hypocoercivity result for \eqref{eq:mainkinetic} provided by Theorem \ref{thm:longtime} and the exponential relaxation to equilibrium of the solution to the parabolic equation \eqref{eq:mainparabolic} on the torus due to the Poincar\'{e} inequality. 
	\begin{prp}\label{prp:long_time_asymptotics}
		Let $f_{\eps, \mathrm{in}}\in L^{1}(\T^{d}\times\R^{d})$  and $f_{\eps}$ be the solution of Equation \eqref{eq:mainkinetic} with initial data $f_{\eps, \mathrm{in}}$ for $\eps\in(0,\eps_{0})$ with $\eps_{0}>0$ given by Theorem \ref{thm:longtime}. Let $\rho_{\mathrm{in}}\in C^{2}(\T^{d})$ satisfy $A^{-1}\leq\rho_{\mathrm{in}}\leq A$ and $\tilde \rho$ be the solution of \eqref{eq:mainparabolic} with initial data $\rho_{\mathrm{in}}$. Then there exist positive constants $C,\gamma>0$ depending only on $\alpha,A,d$ such that for any $t\geq0$ we have
		\begin{equation*}
			\|f_{\eps}-\tilde \rho\mathcal{M}\|_{L^{2}_{x,v}(\mathcal{M}^{-1})}\leq Ce^{-\gamma t}.
		\end{equation*}
	\end{prp}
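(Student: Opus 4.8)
The plan is to prove this as essentially a corollary of the two exponential relaxation results already available, glued together by the triangle inequality. The point is that both $f_\eps$ and the ansatz $\tilde\rho\,\mc M$ relax to the \emph{same} equilibrium, namely the global Maxwellian $\mc M$ (equivalently, $\tilde\rho$ relaxes to the constant $1$), once the masses are normalised so that $\|f_{\eps,\mathrm{in}}\|_{L^1_{x,v}}=\int_{\T^d}\rho_{\mathrm{in}}\,\dx=1$. I would write
\[
\|f_\eps-\tilde\rho\,\mc M\|_{L^2_{x,v}(\mc M^{-1})}\leq \|f_\eps-\mc M\|_{L^2_{x,v}(\mc M^{-1})}+\|(1-\tilde\rho)\,\mc M\|_{L^2_{x,v}(\mc M^{-1})},
\]
and bound each term by $Ce^{-\gamma t}$ with constants depending only on $\alpha,A,d$.

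For the first term I would invoke Theorem \ref{thm:longtime} directly. Its hypothesis $A^{-1}\leq\rho_{f_\eps}\leq A$ is supplied by the propagation of Maxwellian bounds in Theorem \ref{thm:well-posedness} (integrate $A^{-1}\mc M\leq f_\eps\leq A\mc M$ in $v$). To make the prefactor depend only on $A$, I would control the initial deviation pointwise: since $A^{-1}\leq f_{\eps,\mathrm{in}}/\mc M\leq A$, one has $|f_{\eps,\mathrm{in}}/\mc M-1|\leq A-1$, whence $\|f_{\eps,\mathrm{in}}-\mc M\|^2_{L^2_{x,v}(\mc M^{-1})}\leq (A-1)^2$ (using $\int_{\R^d}\mc M\,\dv=1$ and $|\T^d|=1$). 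Theorem \ref{thm:longtime} then yields $\|f_\eps-\mc M\|_{L^2_{x,v}(\mc M^{-1})}\leq \sqrt3\,(A-1)\,e^{-\gamma_1 t/2}$, where $\gamma_1$ is the rate of that theorem, depending only on $\alpha,A$.

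For the second term, a direct computation using $\int_{\R^d}\mc M\,\dv=1$ reduces it to a purely spatial quantity, $\|(1-\tilde\rho)\,\mc M\|^2_{L^2_{x,v}(\mc M^{-1})}=\|\tilde\rho-1\|^2_{L^2_x}$, and I would then run a standard energy--Poincar\'e argument on \eqref{eq:mainparabolic}. Since mass is conserved we have $\int_{\T^d}(\tilde\rho-1)\,\dx=0$, and
\[
\frac{\d}{\d t}\,\tfrac12\|\tilde\rho-1\|^2_{L^2_x}=\int_{\T^d}(\tilde\rho-1)\,\nabla_x\cdot(\tilde\rho^{-\alpha}\nabla_x\tilde\rho)\,\dx=-\int_{\T^d}\tilde\rho^{-\alpha}\,|\nabla_x\tilde\rho|^2\,\dx.
\]
By Lemma \ref{lem:rho_well-posedness} we have $A^{-1}\leq\tilde\rho\leq A$, so $\tilde\rho^{-\alpha}\geq A^{-|\alpha|}$, and the Poincar\'e inequality on the unit torus (first nonzero eigenvalue $4\pi^2$) gives $\|\nabla_x\tilde\rho\|^2_{L^2_x}\geq 4\pi^2\|\tilde\rho-1\|^2_{L^2_x}$. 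Gr\"onwall's lemma then produces $\|\tilde\rho-1\|^2_{L^2_x}\leq e^{-\gamma_2 t}\|\rho_{\mathrm{in}}-1\|^2_{L^2_x}$ with $\gamma_2=8\pi^2A^{-|\alpha|}$, while $\|\rho_{\mathrm{in}}-1\|^2_{L^2_x}\leq (A-A^{-1})^2$ by the bounds on $\rho_{\mathrm{in}}$.

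Combining the two estimates with $\gamma:=\min\{\gamma_1/2,\gamma_2/2\}$ and $C$ the sum of the two prefactors closes the proof, and all constants depend only on $\alpha,A,d$. Because the statement is a corollary of two existing relaxation results, there is no deep obstacle; the only genuine points requiring care are (i) ensuring the two equilibria coincide, which is precisely the mass-normalisation $\int_{\T^d}\rho_{\mathrm{in}}\,\dx=1=\|f_{\eps,\mathrm{in}}\|_{L^1_{x,v}}$ implicit in the setup (without it the right-hand side of the triangle inequality would not tend to $0$), and (ii) checking that the parabolic decay rate and prefactor are $\eps$-independent and depend only on the stated parameters — which holds since \eqref{eq:mainparabolic} contains no $\eps$ and the Poincar\'e constant on the unit torus is dimension-independent.
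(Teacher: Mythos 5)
Your proposal is correct and follows essentially the same route as the paper: the triangle inequality about the common equilibrium $\mc M$, Theorem \ref{thm:longtime} for the kinetic part, and an energy--Poincar\'e--Gr\"onwall argument on $\|\tilde\rho-1\|_{L^2_x}^2$ for the parabolic part, with the mass normalisation making the two equilibria coincide. If anything, you are slightly more careful than the paper's own writeup in making the prefactor explicit via the pointwise bound $|f_{\eps,\mathrm{in}}/\mc M-1|\leq A-1$ and in tracking the $\eps$-independence of all constants.
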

	\begin{proof}
		First, we set $1=\int_{\mathbb{T}^d \times \mathbb{R}^d} f_{\eps, \mathrm{in}}\dx \dv = \int_{\mathbb{T}^d} \rho_{\mathrm{in}} \dx$ and note that the masses of $\tilde \rho$ and of $f_\eps$ are conserved in time.
		
		Then, we have
		\begin{equation*}
			\frac{1}{2}\frac{\d}{\d t}\|\tilde \rho -1\|_{L_{x}^{2}}^{2}=\int_{\T^{d}}(\Tilde{\rho}-1)\partial_{t}\Tilde{\rho}\d x=\int_{\T^{d}}\Tilde{\rho}\partial_{t}\Tilde{\rho}\d x=\int_{\T^{d}}\Tilde{\rho}\nabla_{x}\cdot\bigl(\Tilde{\rho}^{-\alpha}\nabla_{x}\Tilde{\rho}\bigr)\d x.
		\end{equation*}
		where the second inequality follows from the conservation of mass. Integrating by parts, we then find
		\begin{equation*}
			\frac{1}{2}\frac{\d}{\d t}\|\tilde \rho -1\|_{L_{x}^{2}}^{2}=-\int_{\T^{d}}  \tilde \rho ^{-\alpha}|\nabla_{x}\tilde \rho|^{2}\d x\leq-C\|\nabla_{x}\tilde \rho\|_{L_{x}^{2}}^{2}=-C\|\nabla_{x}(\tilde \rho-1)\|_{L_{x}^{2}}^{2} \leq -\tilde{C} \|\tilde \rho-1\|^2_{L^2_x},
		\end{equation*}
		for some $C>0$ depending only on $\alpha, A$ where the last inequality follows from Poincar\'e inequality and $\tilde{C}$ only depends on $\alpha,A, d$. Then, Gr\"onwall's lemma yields
		\begin{equation*}
			\|\tilde \rho-1\|_{L_{x}^{2}}\leq e^{-2\tilde{C}t}\|\rho_{\mathrm{in}}-1\|_{L_{x}^{2}}.
		\end{equation*}
		Then thanks to \eqref{thm:longtime}, there exists $C'>0$ depending only on $\alpha, A$ such that
		\begin{equation*}
			\|f_{\eps}-\mathcal{M}\|_{L_{x,v}^{2}(\mathcal{M}^{-1})}\leq\|f_{\eps}\|_{L_{x,v}^{2}(\mathcal{M}^{-1})}\leq e^{-C't}\|f_{\eps,\mathrm{in}}-1\|_{L_{x,v}^{2}(\mathcal{M}^{-1})},
		\end{equation*}
		and so we find that
		\begin{align*} 
			\|f_{\eps}-\tilde \rho\mathcal{M}\|_{L_{x,v}^{2}(\mathcal{M}^{-1})}&\leq\|f_{\eps}-\mathcal{M}\|_{L_{x,v}^{2}(\mathcal{M}^{-1})}+\| \tilde \rho-1\|_{L_{x}^{2}}\nonumber \\
			&\lesssim e^{-\gamma t},
		\end{align*}
		for some $\gamma>0$ depending only on $\alpha, A$ and $d$.
		
	\end{proof}
	Thus, we may control the distance between the solution to the kinetic equation \eqref{eq:mainkinetic} and the solution to the parabolic equation \eqref{eq:mainparabolic} in the long-time by the sum of their respective exponential rates of convergence to equilibrium.
	
	\subsection{Finite-time asymptotics}
	\label{ssec:finite_time_asymptotics}
	Let us now consider the finite-time diffusive asymptotics. In order to simplify the exposition we work with the rescaled solution $h_{\eps} := f_{\eps}/\mathcal{M}$ which solves
	\begin{align} \label{eq:h}
		\begin{split}
			\partial_t h_{\eps} + \frac{1}{\eps}v\cdot \nabla_x h_{\eps} &= \frac{1}{\eps^2} \rho_{\eps}^{\alpha}\lp \rho_{\eps}-h_{\eps}\rp, \\
			h(t = 0) &= h_{\mathrm{in}} := f_{\eps, \mathrm{in}}/\mathcal{M}.
		\end{split}
	\end{align}
	The main result of this subsection is the following.
	\begin{prp} \label{prp:short-time}
		Let $\rho_{\mathrm{in}}\in C^{2}(\T^{d})$ and $h_{\mathrm{in}}\in L^{1}_{x,v}(\T^{d}\times\R^{d})$ both be valued in $[A^{-1},A]$ for $A>1$, and take $\eps\in(0,\min\{\eps_{0},1\})$ with $\eps_{0}$ given by Theorem \ref{thm:longtime}. Let $h_{\eps}$ be the solution of \eqref{eq:h} with initial data $h_{\mathrm{in}}$ and $\tilde \rho$ the solution of \eqref{eq:mainparabolic} with initial data $\rho_{\mathrm{in}}$. Then, there exist constants $C, C'>0$ depending only on $\alpha, A, \|\rho_{\mathrm{in}}\|_{C^{2}}$ such that for all $t\geq0$, the following estimate holds,
		\begin{equation}
			\|h_{\eps}-\tilde \rho\|_{L_{x,v}^{2}(\mathcal{M})}^{2}\leq e^{Ct}\Bigl(\|h_{\mathrm{in}}-\rho_{\mathrm{in}}\|_{L_{x,v}^{2}(\mathcal{M})}^{2}+C'\eps\bigl(1+t^{\frac{1}{2}}\bigr)\Bigr).
		\end{equation}
	\end{prp}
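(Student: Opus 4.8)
The plan is to run a modulated (relative $L^2$) energy estimate on the difference between $h_\eps$ and a corrected hydrodynamic profile. I would not compare $h_\eps$ directly with $\tilde \rho$: the naive difference produces an $O(1)$ transport term $\eps^{-1}\int_{\T^d}\nabla_x\tilde \rho\cdot\int_{\R^d} v\,h_\eps\,\mc M\dv\dx$ that does not vanish as $\eps\to0$. Instead, motivated by the formal Hilbert expansion, I would introduce the first-order corrector
\begin{equation*}
	\bar h_\eps := \tilde \rho - \eps\, v\cdot \tilde \rho^{-\alpha}\nabla_x\tilde \rho .
\end{equation*}
Its velocity average is exactly $\tilde \rho$ (the odd moment vanishes), so $\|\bar h_\eps-\tilde \rho\|_{L^2_{x,v}(\mc M)}=O(\eps)$ is controlled directly by $\|\rho_{\mathrm{in}}\|_{C^2}$ via Lemma \ref{lem:rho_well-posedness}; by the triangle inequality $\|h_\eps-\tilde \rho\|\le \|e_\eps\|+\|\bar h_\eps-\tilde \rho\|$ it then suffices to bound $e_\eps:=h_\eps-\bar h_\eps$, and the same lemma controls the initial data correction so that $\|e_\eps(0)\|^2\lesssim \|h_{\mathrm{in}}-\rho_{\mathrm{in}}\|^2+\eps^2$.

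First I would compute the residual $\mathcal R_\eps := \partial_t\bar h_\eps + \tfrac1\eps v\cdot\nabla_x\bar h_\eps - \tfrac1{\eps^2}\tilde \rho^{\alpha}(\tilde \rho-\bar h_\eps)$. Using $\partial_t\tilde \rho=\nabla_x\cdot(\tilde \rho^{-\alpha}\nabla_x\tilde \rho)$, the two $O(\eps^{-1})$ terms cancel and one finds
\begin{equation*}
	\mathcal R_\eps=\sum_{i,j}(\delta_{ij}-v_iv_j)\,\partial_{x_i}\!\big(\tilde \rho^{-\alpha}\partial_{x_j}\tilde \rho\big)-\eps\,v\cdot\partial_t\big(\tilde \rho^{-\alpha}\nabla_x\tilde \rho\big).
\end{equation*}
The two structural facts I would exploit are that $\int_{\R^d}\mathcal R_\eps\,\mc M\dv=0$, so $\mathcal R_\eps$ lies entirely in the non-equilibrium (dissipated) subspace, and that $\|\mathcal R_\eps\|_{L^2_{x,v}(\mc M)}$ is bounded by the $C^2$-norm of $\tilde \rho$ (for the first term) together with higher derivatives that are finite for $t>0$ by parabolic smoothing (for the $\partial_t$ piece).

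Next I would differentiate $\tfrac12\|e_\eps\|_{L^2_{x,v}(\mc M)}^2$. The transport operator is skew-adjoint in $L^2_{x,v}(\mc M)$ and drops out. As in the computation leading to \eqref{ineq:f_2}, the collision term is coercive, giving $-\eps^{-2}\int_{\T^d}\rho_\eps^{\alpha}\|(I-\Pi)e_\eps\|_{L^2_v(\mc M)}^2\dx\le -c_A\eps^{-2}\|(I-\Pi)e_\eps\|_{L^2_{x,v}(\mc M)}^2$ with $c_A=A^{-|\alpha|}$, where $(I-\Pi)e_\eps:=e_\eps-\int_{\R^d} e_\eps\,\mc M\dv$. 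The nonlinearity mismatch $\tilde \rho^{\alpha}-\rho_\eps^{\alpha}$ is Lipschitz on $[A^{-1},A]$, so by the mean value theorem it is controlled by $|\rho_\eps-\tilde \rho|\le\|e_\eps\|$; this produces a harmless $C\|e_\eps\|^2$ term, which is the origin of the growth constant $C$ in the final Grönwall step. The residual contributes only through $-\int_{\T^d}\!\int_{\R^d}(I-\Pi)e_\eps\,\mathcal R_\eps\,\mc M\dv\dx$, since $\mathcal R_\eps$ has zero velocity average.

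The hard part will be this residual term, which is also the source of the stated $\eps(1+t^{1/2})$ rate: $\mathcal R_\eps$ has an $O(1)$ component, so it cannot be treated as a small forcing. The remedy is that it is paired with $(I-\Pi)e_\eps$, which is dissipated at rate $\eps^{-2}$. Integrating the energy identity in time and applying Cauchy--Schwarz in $t$,
\begin{equation*}
	\int_0^t \|(I-\Pi)e_\eps\|_{L^2_{x,v}(\mc M)}\,\|\mathcal R_\eps\|_{L^2_{x,v}(\mc M)}\,\d s\le\Big(\int_0^t\|(I-\Pi)e_\eps\|^2\,\d s\Big)^{1/2}\Big(\int_0^t\|\mathcal R_\eps\|^2\,\d s\Big)^{1/2},
\end{equation*}
the accumulated dissipation bounds the first factor by $O(\eps)$ while $\|\mathcal R_\eps\|=O(1)$ bounds the second by $O(t^{1/2})$, and the constant-in-$t$ part of the estimate comes from the corrector and initial-data corrections. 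Closing this is delicate because the control of $\int_0^t\|(I-\Pi)e_\eps\|^2$ itself involves $\sup_s\|e_\eps\|^2$, so a careful bootstrap is needed; I would also treat the $\eps\,v\cdot\partial_t(\tilde \rho^{-\alpha}\nabla_x\tilde \rho)$ piece, which is singular as $t\to0^+$ for merely $C^2$ data, by an integration by parts in time or by the parabolic smoothing estimate, and finally apply Grönwall and the triangle inequality to recover the bound on $\|h_\eps-\tilde \rho\|$.
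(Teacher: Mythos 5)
Your corrector approach is essentially sound, but it is genuinely different from the paper's argument, and your opening premise --- that the direct comparison of $h_{\eps}$ with $\tilde\rho$ fails because of the $O(\eps^{-1})$ transport term --- is too pessimistic. The paper does exactly that direct comparison: the dangerous term $-\frac{1}{\eps}\int_{\T^{d}}j_{\eps}\cdot\nabla_{x}\tilde\rho\,\d x$ is absorbed by completing the square with $Q_{\eps}=\frac{1}{\eps}\rho_{\eps}^{\alpha/2}j_{\eps}+\tilde\rho^{-\alpha/2}\nabla_{x}\tilde\rho$, after which the only non-Gr\"onwallable remainder is $\int_{\T^{d}}R_{\eps}\cdot\tilde\rho^{-\alpha}\nabla_{x}\tilde\rho\,\d x$ with $R_{\eps}=\nabla_{x}\rho_{\eps}+\frac{1}{\eps}\rho_{\eps}^{\alpha}j_{\eps}=-\nabla_{x}\cdot E_{\eps}-\eps\partial_{t}j_{\eps}$; this is treated weakly, passing the derivatives onto the smooth profile $\tilde\rho^{-\alpha}\nabla_{x}\tilde\rho$ and controlling $\int_{0}^{t}\|j_{\eps}\|_{L^{1}_{x}}$ and $\int_{0}^{t}\|E_{\eps}\|_{L^{1}_{x}}$ by $O(\eps t^{1/2})$ via Lemma \ref{lem:flux,energy-bounds} and the time-integrated dissipation $\int_{0}^{t}\|h_{\eps}-\rho_{\eps}\|_{L^{2}_{x,v}(\mc M)}^{2}\,\d s\lesssim\eps^{2}$ inherited from the proof of Theorem \ref{thm:longtime}. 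That is the same dissipation budget you spend, only invested in the moments $j_{\eps},E_{\eps}$ rather than in $(I-\Pi)e_{\eps}$. What your route buys is a structurally cleaner energy identity: the residual $\mathcal{R}_{\eps}$ has zero velocity average, so all forcing lands in the coercively dissipated subspace and no completing-the-square manipulation of the nonlinearity is needed. What it costs is one extra derivative of $\tilde\rho$ entering pointwise through $\mathcal{R}_{\eps}$, and the bookkeeping of corrector corrections at $t=0$ and in the final triangle inequality (both harmless here since $\|h_{\mathrm{in}}-\rho_{\mathrm{in}}\|$ is a priori bounded by $A-A^{-1}$, so cross terms are $O(\eps)$).

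Two technical points deserve sharpening. First, the nonlinearity mismatch is not merely a Lipschitz perturbation: since $\tilde\rho-\bar h_{\eps}=\eps v\cdot\tilde\rho^{-\alpha}\nabla_{x}\tilde\rho$, the difference of collision terms produces $\frac{1}{\eps}\lp\rho_{\eps}^{\alpha}-\tilde\rho^{\alpha}\rp v\cdot\tilde\rho^{-\alpha}\nabla_{x}\tilde\rho$, which carries a factor $\eps^{-1}$; it is controllable only because the factor $v\,\mc M$ has zero velocity average, so it pairs with $(I-\Pi)e_{\eps}$ and Young's inequality absorbs the $\eps^{-1}$ into the $\eps^{-2}$ dissipation, leaving the $C\|e_{\eps}\|^{2}$ you announced --- the mechanism is the same as for $\mathcal{R}_{\eps}$, and your write-up should say so. Second, the bootstrap you worry about is unnecessary: applying Young directly,
\begin{equation*}
	\int_{0}^{t}\|(I-\Pi)e_{\eps}\|\,\|\mathcal{R}_{\eps}\|\,\d s\leq\frac{c_{A}}{2\eps^{2}}\int_{0}^{t}\|(I-\Pi)e_{\eps}\|^{2}\,\d s+\frac{\eps^{2}}{2c_{A}}\int_{0}^{t}\|\mathcal{R}_{\eps}\|^{2}\,\d s,
\end{equation*}
closes the estimate in one line, with the first term absorbed by the dissipation and the second of size $O(\eps^{2}t)$, which suffices for (and in the regime $t\lesssim\eps^{-1}$ improves on) the stated $\eps(1+t^{1/2})$. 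Finally, the $t\to0^{+}$ singularity of $\partial_{t}\lp\tilde\rho^{-\alpha}\nabla_{x}\tilde\rho\rp$ for merely $C^{2}$ data that you flag is a fair concern, but note it afflicts the paper's proof identically, since the paper also invokes $\|\partial_{t}(\tilde\rho^{-\alpha}\nabla_{x}\tilde\rho)\|_{L^{\infty}_{t,x}}$ as a constant depending on $\|\rho_{\mathrm{in}}\|_{C^{2}}$; so it is not a defect specific to your approach.
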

	\noindent This result relies on the uniform bounds from above and below of the densities $\rho_{\eps},\tilde \rho$ and the regularity of the limit equation to write a Gr\"onwall argument on the distance $\|f_{\eps}-\tilde \rho\mathcal{M}\|_{L^{2}_{x,v}(\mathcal{M}^{-1})}$. In order to bound the remaining terms by $\eps$, we then use the quantified rate of convergence to zero of the time integrated norms of the flux and the energy. This rate is uniform in $\eps$ on finite times and follows from Theorem \ref{thm:longtime} and Lemma \ref{lem:flux,energy-bounds}.
	
	\begin{proof}
		We begin by calculating the dissipation of the relative entropy,
		\begin{align*}
			\frac{1}{2}\frac{\d}{\d t}\|h_{\eps}-\tilde \rho\|_{L^{2}_{x,v}(\mathcal{M})}^{2}&=\int_{\T^{d}}\int_{\R^{d}}h_{\eps}\partial_{t}h_{\eps}\mathcal{M}\d v\d x-\int_{\T^{d}}\bigl(\partial_{t}\rho_{\eps}\tilde \rho+\rho_{\eps}\partial_{t}\tilde \rho\bigr)\d x+\int_{\T^{d}}\tilde \rho\partial_{t}\tilde \rho\d x \\
			&=\int_{\T^{d}}\int_{\R^{d}}\frac{1}{\eps^{2}}h_{\eps}\rho_{\eps}^{\alpha}\bigl(\rho_{\eps}-h_{\eps}\bigr)\mathcal{M}\d v\d x+\int_{\T^{d}}\frac{1}{\eps}\nabla_{x}\cdot j_{\eps}\tilde \rho\d x \\
			&\qquad \quad -\int_{\T^{d}}\rho_{\eps}\nabla_{x}\cdot\bigl(\tilde \rho^{-\alpha}\nabla_{x}\tilde \rho\bigr)\d x+\int_{\T^{d}}\tilde \rho\nabla_{x}\cdot\bigl(\tilde \rho^{-\alpha}\nabla_{x}\tilde \rho\bigr)\d x \\
			&=-\int_{\T^{d}}\int_{\R^{d}}\frac{1}{\eps^{2}}\rho_{\eps}^{\alpha}\bigl(h_{\eps}-\rho_{\eps}\bigr)^{2}\mathcal{M}\d v\d x-\int_{\T^{d}}\frac{1}{\eps}j_{\eps}\cdot \nabla_{x}\tilde \rho\d x \\
			&\qquad \quad +\int_{\T^{d}}\tilde \rho^{-\alpha}\nabla_{x}\rho_{\eps}\cdot\nabla_{x}\tilde \rho\d x-\int_{\T^{d}}\tilde \rho^{-\alpha}|\nabla_{x}\tilde \rho|^{2}\d x \\
			&\leq-\int_{\T^{d}}\frac{1}{\eps^{2}}\rho_{\eps}^{\alpha}|j_{\eps}|^{2}\d x-\int_{\T^{d}}\tilde \rho^{-\alpha}|\nabla_{x}\tilde \rho|^{2}\d x\\
			&\qquad \quad -\int_{\T^{d}}\frac{1}{\eps}j_{\eps}\cdot\nabla_{x}\tilde \rho\d x+\int_{\T^{d}}\tilde \rho^{-\alpha}\nabla_{x}\rho_{\eps}\cdot\nabla_{x}\tilde \rho\d x,
		\end{align*}
		where on the last line we applied Lemma \ref{lem:flux,energy-bounds}.
		
		Now, by Remark \eqref{rem:R_eps}, we expect formally that as $\eps \rightarrow 0$ we will have 
		\begin{align} \label{convergence}
			- \frac{1}{\eps} \rho_\eps^\alpha j_\eps - \nabla_x \rho_\eps \to  0.
		\end{align}
		Furthermore, we can see that
		\begin{align*}
			- \frac{1}{\eps} \rho_\eps^\alpha j_\eps - \nabla_x \rho_\eps = \eps \partial_t j_\eps + \nabla_x \cdot E_\eps. 
		\end{align*}
		This means that \eqref{convergence} does in fact hold in $H^{-1}_{t,x}([0,T] \times \mathbb{T}^d)$ as it follows from the convergence of $j_\eps, E_\eps$ to zero in $L^2([0,T] \times \mathbb{T}^d)$. We give a detailed proof of this later on. We aim to exploit this fact by rearranging the dissipation of the relative entropy. First, we complete the square with the two negative dissipation terms. To this end, we define
		\begin{align*}
			Q_{\eps}:=\frac{1}{\eps}\rho_{\eps}^{\frac{\alpha}{2}}j_{\eps}+\tilde \rho^{-\frac{\alpha}{2}}\nabla_{x}\tilde \rho, \quad \text{and} \quad R_{\eps}:=\nabla_{x}\rho_{\eps}+\frac{1}{\eps}\rho_{\eps}^{\alpha}j_{\eps}
		\end{align*}
		Then we have,
		\begin{align*}
			\frac{1}{2}\frac{\d}{\d t}\|h_{\eps}-\tilde \rho\|_{L^{2}_{x,v}(\mathcal{M})}^{2}&\leq-\int_{\T^{d}}|Q_{\eps}|^{2}\d x-\int_{\T^{d}}\biggl(1-2\Bigl(\frac{\rho_{\eps}}{\tilde \rho}\Bigr)^{\frac{\alpha}{2}}\biggr)\frac{1}{\eps}j_{\eps}\cdot\nabla_{x}\tilde \rho\d x+\int_{\T^{d}}\tilde \rho^{-\alpha}\nabla_{x}\rho_{\eps}\cdot\nabla_{x}\tilde \rho\d x
			\\
			&=-\int_{\T^{d}}|Q_{\eps}|^{2}\d x-\int_{\T^{d}}\biggl(1-2\Bigl(\frac{\rho_{\eps}}{\tilde \rho}\Bigr)^{\frac{\alpha}{2}}+\Bigl(\frac{\rho_{\eps}}{\tilde \rho}\Bigr)^{\alpha}\biggr)\frac{1}{\eps}j_{\eps}\cdot\nabla_{x}\tilde \rho\d x \\
			&\qquad \quad +\int_{\T^{d}}\Bigl(\nabla_{x}\rho_{\eps}+\frac{1}{\eps}\rho_{\eps}^{\alpha}j_{\eps}\Bigr)\cdot\tilde \rho^{-\alpha}\nabla_{x}\tilde \rho\d x \\
			&=-\int_{\T^{d}}|Q_{\eps}|^{2}\d x-\int_{\T^{d}}\biggl(1-\Bigl(\frac{\rho_{\eps}}{\tilde \rho}\Bigr)^{\frac{\alpha}{2}}\biggr)^{2}\frac{1}{\eps}j_{\eps}\cdot\nabla_{x}\tilde \rho\d x+\int_{\T^{d}}R_{\eps}\cdot\tilde \rho^{-\alpha}\nabla_{x}\tilde \rho\d x,
		\end{align*}
		In order to deal with the remaining $\eps^{-1}$ term we rewrite $\eps^{-1}j_{\eps}=\rho_{\eps}^{-\frac{\alpha}{2}}Q_{\eps}-(\rho_{\eps}\tilde \rho)^{-\frac{\alpha}{2}}\nabla_{x}\tilde \rho$ and substitute in to find
		\begin{align}
			\frac{1}{2}\frac{\d}{\d t}\|h_{\eps}-\tilde \rho\|_{L^{2}_{x,v}(\mathcal{M})}^{2}&\leq-\int_{\T^{d}}|Q_{\eps}|^{2}\d x-\int_{\T^{d}}\biggl(1-\Bigl(\frac{\rho_{\eps}}{\tilde \rho}\Bigr)^{\frac{\alpha}{2}}\biggr)^{2}\rho_{\eps}^{-\frac{\alpha}{2}}\nabla_{x}\tilde \rho\cdot Q_{\eps}\d x\nonumber\\
			&\qquad \quad +\int_{\T^{d}}\biggl(1-\Bigl(\frac{\rho_{\eps}}{\tilde \rho}\Bigr)^{\frac{\alpha}{2}}\biggr)^{2}(\rho_{\eps}\tilde \rho)^{-\frac{\alpha}{2}}|\nabla_{x}\tilde \rho|^{2}\d x+\int_{\T^{d}}R_{\eps}\cdot\tilde \rho^{-\alpha}\nabla_{x}\tilde \rho\d x \nonumber\\
			&\leq\int_{\T^{d}}R_{\eps}\cdot\tilde \rho^{-\alpha}\nabla_{x}\tilde \rho\d x+\frac{1}{4}\int_{\T^{d}}\biggl(1-\Bigl(\frac{\rho_{\eps}}{\tilde \rho}\Bigr)^{\frac{\alpha}{2}}\biggr)^{4}\rho_{\eps}^{-\alpha}|\nabla_{x}\tilde \rho|^{2}\d x \nonumber\\
			&\qquad \quad +\int_{\T^{d}}\biggl(1-\Bigl(\frac{\rho_{\eps}}{\tilde \rho}\Bigr)^{\frac{\alpha}{2}}\biggr)^{2}(\rho_{\eps}\tilde \rho)^{-\frac{\alpha}{2}}|\nabla_{x}\tilde \rho|^{2}\d x \nonumber\\
			&\leq\int_{\T^{d}}R_{\eps}\cdot\tilde \rho^{-\alpha}\nabla_{x}\tilde \rho\d x+\frac{1}{4}\int_{\T^{d}}\biggl(1-\Bigl(\frac{\rho_{\eps}}{\tilde \rho}\Bigr)^{\alpha}\biggr)^{2}\rho_{\eps}^{-\alpha}|\nabla_{x}\tilde \rho|^{2}\d x\nonumber \\
			&\qquad \quad +\int_{\T^{d}}\biggl(1-\Bigl(\frac{\rho_{\eps}}{\tilde \rho}\Bigr)^{\frac{\alpha}{2}}\biggr)^{2}(\rho_{\eps}\tilde \rho)^{-\frac{\alpha}{2}}|\nabla_{x}\tilde \rho|^{2}\d x\nonumber \\
			&:=\int_{\T^{d}}R_{\eps}\cdot\tilde \rho^{-\alpha}\nabla_{x}\tilde \rho\d x+I_{1}+I_{2}. \label{eq:diss-pre-gronwall}
		\end{align}
		In the above calculation, we first applied Young's inequality to deal with the $Q_{\eps}$ terms, followed by the inequality $|f^{\frac{1}{2}}-g^{\frac{1}{2}}|\leq|f-g|^{\frac{1}{2}}$ for all $f,g\geq0$. 
		
		We now wish to bound the integrals $I_{1}, I_{2}$ in terms of the distance $\|\rho_{\eps}-\tilde \rho\|_{L_{x}^{2}}$ using the uniform in $\eps$ and time bounds on $\rho_{\eps},\tilde \rho$, as well as the regularity of the limit equation.
		
		We begin by bounding the first integral term $I_{1}$. H\"older's inequality gives us
		\begin{equation} \label{eq:I1-inequality}
			I_{1}\leq\frac{1}{4}\bigl\|\rho_{\eps}^{-\alpha}|\nabla_{x}\tilde \rho|^{2}\bigr\|_{L_{t,x}^{\infty}}\int_{\T^{d}}\biggl(1-\Bigl(\frac{\rho_{\eps}}{\tilde \rho}\Bigr)^{\alpha}\biggr)^{2}\d x\lesssim\Bigl\|1-\Bigl(\frac{\rho_{\eps}}{\tilde \rho}\Bigr)^{\alpha}\Bigr\|_{L^{2}_{x}}^{2},
		\end{equation}
		with the constant depending only on $A,\alpha$ and $\|\rho_{\mathrm{in}}\|_{C^{2}}$ thanks to Lemma \ref{lem:rho_well-posedness}. We now apply Taylor's theorem to the function $f(z)=z^{\alpha}$ at $a=1$ to write $z^{\alpha}=1+\alpha(z-1)+h(z)(z-1)$ for some smooth $h:\R\rightarrow\R$ with $\lim_{z\rightarrow1}h(z)=0$. Thus, for any compact interval $I$ we have $|1-z^{\alpha}|\lesssim|1-z|$ for all $z\in I$ with the constant depending only on $\alpha, |I|$. Thanks to the uniform bounds on $\rho_{\eps},\tilde \rho$ we may apply this to \eqref{eq:I1-inequality} to find
		\begin{equation*}
			I_{1}\lesssim \Bigl\|1-\frac{\rho_{\eps}}{\tilde \rho}\Bigr\|_{L^{2}_{x}}^{2}\lesssim\int_{\T^{d}}\bigl(\rho_{\eps}-\tilde \rho\bigr)^{2}\d x=\int_{\T^{d}}\biggl(\int_{\R^{d}}\bigl(h_{\eps}-\tilde \rho\bigr)\mathcal{M}\d v\biggr)^{2}\d x\leq \|h_{\eps}-\tilde \rho\|_{L_{x,v}^{2}(\mathcal{M})}^{2},
		\end{equation*}
		where we again used the uniform bounds on $\tilde \rho$. Then, the final inequality follows from Jensen's inequality. We can bound $I_{2}$ in a similar way with a constant depending on the same quantities.
		
		Plugging these estimates back in to \eqref{eq:diss-pre-gronwall} and using Gr\"onwall's lemma, we find that for any $T>0$ we have
		\begin{equation}\label{eq:diss-post-gronwall}
			\|h_{\eps}-\tilde \rho\|_{L^{2}_{x,v}(\mathcal{M})}^{2}\leq e^{Ct}\lp \|h_{\mathrm{in}}-\rho_{\mathrm{in}}\|_{L^{2}_{x,v}(\mathcal{M})}^{2}+2\int_{0}^{t}e^{-Cs}\int_{\T^{d}}R_{\eps}\cdot \tilde \rho^{-\alpha} \nabla_{x}\tilde \rho\d x\d s\rp,
		\end{equation}
		for some $C>0$ depending only on $\alpha,A,\|\rho_{\mathrm{in}}\|_{C^{2}}$, and so in order to conclude, it remains only to bound the final term. 
		
		Note that, using the definition $j_\eps$ in \eqref{hydro_quantitites}, we may express $R_{\eps}$ in terms of derivatives of hydrodynamic quantities, i.e., $R_{\eps}=-\nabla_{x}\cdot E_{\eps}-\eps\partial_{t}j_{\eps}$. Thanks to the smoothness in time and space of the limiting equation we may use $\tilde \rho^{-\alpha} \nabla_{x}\tilde \rho$ as a test function onto which we pass the derivatives appearing in $R_{\eps}$. Thus, integrating by parts and applying H\"older's inequality we have
		\begin{align*}
			\int_{0}^{t}\int_{\T^{d}}e^{-Cs}R_{\eps}\cdot \tilde \rho^{-\alpha} \nabla_{x}\tilde \rho\dx\d s&=\int_{0}^{t}\int_{\T^{d}}\Bigl(e^{-Cs}E_{\eps}:\nabla_{x}\bigl(\tilde \rho^{-\alpha}\nabla_{x}\tilde \rho\bigr)+\eps j_{\eps}\cdot\partial_{s}\bigl(e^{-Cs}\tilde \rho^{-\alpha}\nabla_{x}\tilde \rho\bigr)\Bigr)\dx \d s \\
			&\qquad \quad -\eps\int_{\T^{d}}\Bigl(e^{-Ct}j_{\eps}(t)\cdot\tilde \rho^{-\alpha}(t)\nabla_{x}\tilde \rho(t)-j_{\eps}(0)\cdot\tilde \rho^{-\alpha}(0)\nabla_{x}\tilde \rho(0)\Bigr)\d x \\
			&\leq\Bigl\|\nabla_{x}\bigl(\tilde \rho^{-\alpha}\nabla_{x}\tilde \rho\bigr)\Bigr\|_{L^{\infty}_{t,x}}\int_{0}^{t}\int_{\T^{d}}|E_{\eps}|\d x\d s\\
			&\qquad \quad +\eps\Bigl(C\|\tilde \rho^{-\alpha}\nabla_{x}\tilde \rho\|_{L^{\infty}_{t,x}}+\bigl\|\partial_{t}\bigl(\tilde \rho^{-\alpha}\nabla_{x}\tilde \rho\bigr)\bigr\|_{L^{\infty}_{t,x}}\Bigr)\int_{0}^{t}\int_{\T^{d}}|j_{\eps}|\d x\d s \\
			&\qquad\qquad\qquad+2\eps\|\tilde \rho^{-\alpha}\nabla_{x}\tilde \rho\|_{L^{\infty}_{t,x}}\|j_{\eps}\|_{L^{\infty}(\R_{+};L_{x}^{1})}\\
			&\lesssim\eps\int_{0}^{t}\int_{\T^{d}}|j_{\eps}|\d x\d s+\int_{0}^{t}\int_{\T^{d}}|E_{\eps}|\d x\d s+\eps \\
			&\leq (1+\eps)\int_{0}^{t}\int_{\T^{d}}\biggl(\int_{\R^{d}}\bigl(h_{\eps}-\rho_{\eps}\bigr)^{2}\mathcal{M}\d v\biggr)^{\frac{1}{2}}\d x\d s+\eps,
		\end{align*}
		where we used Lemma \ref{lem:flux,energy-bounds} to reach the final inequality. Finally, we use Jensen's inequality once more to bound the remaining integral by
		\begin{equation*}
			\int_{0}^{t}\int_{\T^{d}}\biggl(\int_{\R^{d}}\bigl(h_{\eps}-\rho_{\eps}\bigr)^{2}\mathcal{M}\d v\biggr)^{\frac{1}{2}}\d x\d s\leq t^{\frac{1}{2}}\biggl(\int_{0}^{t}\|h_{\eps}-\rho_{\eps}\|_{L^{2}_{x,v}(\mathcal{M})}^{2}\d s\biggr)^{\frac{1}{2}}.
		\end{equation*}
		Lastly, we have from the proof of Theorem \ref{thm:longtime},
		\begin{align*}
			\frac{\mathrm{d}}{\mathrm{d}t}\|h_\eps \|_{L^{2}_{x,v}(\mathcal{M}) }\lesssim -  \frac{1}{\eps^2} \| h_\eps-\rho_\eps\|_{L^{2}_{x,v}(\mathcal{M}) },
		\end{align*}
		and so we have that
		\[ \int_0^t \|h_\eps(s)-\rho_\eps(s)\|_{L^{2}_{x,v}(\mathcal{M}) }^2 \mathrm{d}s \lesssim \eps^2 \|h_\eps(0) \|^2_{L^{2}_{x,v}(\mathcal{M}) } \lesssim \eps^2. \]
		Hence,
		\begin{equation*}
			\int_{0}^{t}\int_{\T^{d}}e^{-Cs}R_{\eps}\cdot\tilde \rho^{-\alpha}\nabla_{x}\tilde \rho\d s\d x\lesssim t^{\frac{1}{2}}\bigl(\eps+\eps^{2}\bigr)+\eps\lesssim \eps\bigl(1+t^{\frac{1}{2}}\bigr),
		\end{equation*}
		for $\eps\in(0,1)$. Plugging this back into \eqref{eq:diss-post-gronwall} we have the result.
	\end{proof}
	
	With this result, we are ready to show the global in time diffusive asymptotics. Recall that in the following we take $0<\eps, \eps'< \frac{1}{2}$, where $\eps$ denotes our scaling parameter and $\eps'$ is a bound on the distance between the initial data for the kinetic and parabolic equations, i.e. $\|f_{\eps,\mathrm{in}}-\rho_{\mathrm{in}}\mathcal{M}\|_{L_{x,v}^{2}(\mathcal{M}^{-1})}\leq\eps'$.
	
	\medskip 
	\noindent\textit{Proof of Theorem \ref{thm:asymptotics}.} Let $T>0$. By Proposition \ref{prp:short-time} there exist constants $C,C'>0$ such that for all $0\leq t\leq T$ we have
	\begin{equation*}
		\|f_{\eps}-\tilde \rho\mathcal{M}\|_{L_{x,v}^{2}(\mathcal{M}^{-1})}^{2}\leq e^{CT}\Bigl((\eps')^{2}+C'\eps\bigl(1+T^{\frac{1}{2}}\bigr)\Bigr)\lesssim e^{CT}\bigl(\eps+\eps'\bigr)\Bigl(1+T^{\frac{1}{2}}\Bigr).
	\end{equation*}
	We could optimise our choice of $T>0$ but for simplicity we choose $T=-\kappa\log(\eps+\eps')$ with $\kappa:=\frac{1}{2C}$. With this choice of $T$ we find
	\begin{align*}
		\|f_{\eps}-\tilde \rho\mathcal{M}\|^{2}_{L_{x,v}^{2}(\mathcal{M}^{-1})}&\lesssim(\eps+\eps')^{\frac{1}{2}}\Bigl(1+(-\kappa\log\bigl(\eps+\eps')\bigr)^{\frac{1}{2}}\Bigr) \\
		&\leq(\eps+\eps')^{\frac{1}{4}}\Bigl(1+(\eps+\eps')^{\frac{1}{4}}(-\kappa\log\bigl(\eps+\eps')\bigr)^{\frac{1}{2}}\Bigr) \\
		&\lesssim (\eps+\eps')^{\frac{1}{4}},
	\end{align*}
	where we used the boundedness of $x^{\frac{1}{2}}\log x$ on $(0,1)$ to bound the $\log(\eps+\eps')$ term appearing on the right-hand-side by a power law. Regarding the long-time asymptotics, thanks to Proposition \ref{prp:long_time_asymptotics} there exists $\tilde{C}>0$ such that for all $t\geq T$ we have
	\begin{equation*}
		\|f_{\eps}-\tilde \rho\mathcal{M}\|_{L_{x,v}^{2}(\mathcal{M}^{-1})}\lesssim e^{-\tilde{C}T}=(\eps+\eps')^{\tilde{C}\kappa},
	\end{equation*}
	with $\tilde{C}$ depending on $\alpha, A$ and $d$ and this completes the proof. 
	\qed

	\section*{Acknowledgements}
	The authors would like to thank Nikita Simonov for interesting discussions around the beginning of this project.
	
	\noindent
	J. Evans is supported by a Royal Society University Research Fellowship R1\_251808 and before this a Leverhulme early career fellowship ECF-2021-134. H. Yolda\c{s} is supported by the Dutch Research Council (NWO) under the NWO-Talent Programme Veni ENW project MetaMathBio with the project number VI.Veni.222.288. Some part of this research was conducted while H. Yolda\c{s} was visiting the Okinawa Institute of Science and Technology (OIST) through the Theoretical Sciences Visiting Program (TSVP).  For the purpose of open access, the authors have applied a Creative Commons Attribution (CC-BY) licence to any Author Accepted Manuscript version arising from this submission.

	\bibliography{nonlinearkinetic-nonlineardiffusion}

\end{document}